\definecolor{brass}{rgb}{0.71, 0.65, 0.36}
\theoremstyle{plain}
\newtheorem{theorem}{Theorem}[section]
\newtheorem{lemma}[theorem]{Lemma}
\newtheorem{proposition}[theorem]{Proposition}
\theoremstyle{definition}
\newtheorem{remark}[theorem]{Remark}
\numberwithin{equation}{section}
\def\inf{\operatorname{inf}}
\theoremstyle{plain}
\numberwithin{equation}{section}
\begin{document}

\title[A Quasi-Local Mass]{A Quasi-Local Mass}

\author[Alaee]{Aghil Alaee}
\address{
Department of Mathematics, Clark University, Worcester, MA 01610, USA;
Center of Mathematical Sciences and Applications, Harvard University, Cambridge, MA. 02138, USA}
\email{aalaeekhangha@clarku.edu, aghil.alaee@cmsa.fas.harvard.edu}

\author[Khuri]{Marcus Khuri}
\address{Department of Mathematics\\
Stony Brook University\\
Stony Brook, NY 11794, USA}
\email{khuri@math.sunysb.edu}

\author[Yau]{Shing-Tung Yau}
\address{Yau Mathematical Sciences Center, Tsinghua University, Beijing, 100084, China;
Yanqi Lake Beijing Institute of Mathematical Sciences and Applications, Beijing, 101408, China}
\email{styau@tsinghua.edu.cn, yau@math.harvard.edu}


\thanks{A. Alaee acknowledges the support of NSF Grant DMS-2316965, and an AMS-Simons travel grant. M. Khuri acknowledges the support of NSF Grant DMS-2104229.}

\begin{abstract}
We define a new gauge independent quasi-local mass and energy, and show its relation to the Brown-York Hamilton-Jacobi analysis. A quasi-local proof of the positivity, based on spacetime harmonic functions, is given for admissible closed spacelike 2-surfaces which enclose an initial data set satisfying the dominant energy condition. Like the Wang-Yau mass, the new definition relies on isometric embeddings into Minkowski space, although our notion of admissibility is different from that of Wang-Yau. Rigidity is also established, in that vanishing energy implies that the 2-surface arises from an embedding into Minkowski space, and conversely the mass vanishes for any such surface. Furthermore, we show convergence to the ADM mass at spatial infinity, 
and provide the equation associated with optimal isometric embedding.
\end{abstract}

\maketitle

\section{Introduction}
\label{sec1} \setcounter{equation}{0}
\setcounter{section}{1}

In 1982, Penrose \cite{PenroseR} listed a set of major open problems in which one was to \emph{find a suitable 
quasi-local definition of energy-momentum in general relativity}. As is well-known, the fundamental difficulty 
is that there is no natural notion of energy density for the gravitational field, due to Einstein's principle of equivalence. This has led to a plethora of different mathematical formulations including: the localization of ADM mass by Bartnik \cite{Bartnik}, twistor and spinor approaches of Dougan-Mason \cite{DM}, Ludvigsen-Vickers \cite{LV}, Penrose \cite{Penrosem}, and Zhang \cite{xzhang}, the Hawking mass \cite{Hawking}, as well as Hamilton-Jacobi methods employed by Booth-Mann \cite{BM}, Brown-York \cite{BrownYork}, Epp \cite{Epp}, Hawking-Horowitz \cite{HH}, Kijowski \cite{Kijowski}, Liu-Yau \cite{LiuYau}, and Wang-Yau \cite{WangYau}. For a detailed account of the various quasi-local masses, see the survey paper of Szabados \cite{Szabados}.

There are several desirable properties that a quasi-local mass should have. For instance, like the positive mass theorem it should be nonnegative for a large class of surfaces in the presence of the dominant energy condition, and exhibit rigidity in the sense that it vanishes if and only if the surface arises from Minkowski space. Appropriate asymptotics are also important, in that the ADM and Bondi masses should be recovered in the large sphere limits at spatial and null infinities. Moreover, gauge independence and relation with a Hamilton-Jacobi analysis are preferable, in order to aid with physical relevance and interpretations. While many other properties of the quasi-local mass itself may be added to the list, a potentially advantageous characteristic of a proof of nonnegativity is that it be quasi-local. This refers to a proof strategy that appeals solely to a compact initial data set enclosed by the spacelike 2-surface in question, as opposed to the use of asymptotically flat (or other) extensions and the positive mass theorem. The latter strategy of extensions is essential in the work of Shi-Tam \cite{ShiTam} on the Brown-York mass, as well as for the Liu-Yau \cite{LiuYau} and Wang-Yau \cite{WangYau0,WangYau} masses, and is even contained within the definition of the Bartnik mass \cite{Bartnik}. In fact, Schoen asked in \cite{Schoen} whether it is possible to \textit{find a quasi-local proof of nonnegativity for the Brown-York and related quasi-local masses}. An affirmative answer to this question for the Brown-York definition has recently been put forward by Montiel \cite{Montiel}, utilizing a spinorial approach. 

In the current paper, we introduce a new gauge independent expression for quasi-local mass with a quasi-local proof of nonnegativity. This mass also comes with a Hamilton-Jacobi interpretation, and although it shares some similarities with the Wang-Yau definition, the new notion has a completely separate derivation and involves a different range of applicability. The derivation and basic motivation comes from certain integral expressions associated with
spacetime harmonic functions \cite{HKK}. In \cite{Stern}, level set techniques for harmonic maps to $S^1$ were introduced into the study of scalar curvature on closed 3-dimensional Riemannian manifolds. Inspired by this, a proof of the Riemannian positive mass theorem \cite{BKKS} was given based on level sets of asymptotically linear harmonic functions. Spacetime harmonic functions were then introduced \cite{HKK} to treat the asymptotically flat spacetime version of the positive mass theorem, and have subsequently been used to prove the corresponding theorem in asymptotically (locally) hyperbolic settings \cite{AlaeeKhuriHung,BHKKZ1}, as well as for comparison theorems in Riemannian geometry \cite{HKKZ,HKKZ1}; surveys of some recent advancements may be found in \cite{BHKKZ1,Stern1}. Throughout this work, unless specified otherwise, all manifolds will be assumed to be connected, oriented, and smooth.

Let $\Sigma$ be a closed (compact without boundary) spacelike 2-surface having induced metric $\sigma$ in spacetime $N^{3,1}$ with metric $\langle\cdot,\cdot\rangle$ of signature $(-+++)$, and let $\{e_3,e_4\}$ be an orthonormal frame for the normal bundle consisting of a spacelike and future directed timelike vector respectively. In this gauge, the normal bundle connection 1-form and mean curvature vector are given by
\begin{equation}
\alpha_{e_3}(\cdot)=\langle\nabla^N_{(\cdot)}e_3,e_4\rangle,\quad\quad\quad \vec{H}=(\mathrm{div}_{\sigma}e_3)e_3 -(\mathrm{div}_{\sigma}e_4)e_4.
\end{equation}
Consider an isometric embedding $\iota:\Sigma\hookrightarrow \mathbb{R}^{3,1}$, and choose Cartesian coordinates $(\mathbf{t},\mathbf{x}^i)$, $i=1,2,3$ for the target Minkowski space. One may then obtain a function $u_{a}=\iota^{*}\left(-\mathbf{t}+a_i\mathbf{x}^i\right)$ on $\Sigma$ for any set of constants $\mathbf{a}=(a_1,a_2,a_3)$, which will be restricted to satisfy $|\mathbf{a}|^2=\sum_i a_i^2=1$. Recall that in \cite{WangYau0,WangYau}, Wang-Yau parameterize a class of isometric embeddings into Minkowski space with a time function $\tau$ on $\Sigma$ satisfying the convexity condition
\begin{equation}\label{convexity}
\left(1+|\nabla_\partial\tau|^2\right)K_{\tilde{\sigma}}=K_{\sigma}
+\frac{\text{det}(\nabla_\partial^2\tau)}{(\det\sigma)\left(1+|\nabla_\partial\tau|^2\right)}>0,
\end{equation}
where $K_{\sigma}$ and $K_{\tilde{\sigma}}$ denote the Gaussian curvatures of $\sigma$ and $\tilde{\sigma}=\sigma+d\tau^2$, and $\nabla_\partial$ is the connection with respect to $\sigma$. By the classical theorem of Nirenberg \cite{Nirenberg} and Pogorelov \cite{Pogorelov} there exists a unique isometric embedding up to rigid motion into $\mathbb{R}^3$, and from this one obtains an isometric embedding into $\mathbb{R}^{3,1}$.  An alternative method to produce isometric embeddings, which does not rely on \eqref{convexity}, is to embed $\Sigma$ into a (hyperboloid) hyperbolic space $\mathbb{H}^3_{-\kappa}\subset\mathbb{R}^{3,1}$ with large $\kappa>0$ to aid with ellipticity. For instance, every metric on the 2-sphere admits such an isometric embedding \cite{Pogorelov1,Pogorelov2}, and a related result of Gromov \cite[Section 3.2.4]{Gromov} shows that any closed 2-surface admits an isometric embedding into some complete 3-dimensional Riemannian target space of constant negative sectional curvature.

The quintuple $(\sigma,\vec{H},\alpha,\iota,u_a)$ will be used to build the quasi-local energy, and may be referred to as a \textit{quasi-local data set} for $\Sigma$. If the mean curvature vector is spacelike, then for any constant $\varepsilon>0$, there exists a unique orthonormal frame $\{\bar{e}_3,\bar{e}_4\}$ for the normal bundle of $\Sigma$ such that
\begin{equation}\label{canonicalframe}
\langle\vec{H},\bar{e}_3\rangle>0,\qquad \quad \langle\vec{H},\bar{e}_4\rangle=\frac{-\Delta_\partial u_a}
{\sqrt{|\nabla_\partial u_a|^2+\varepsilon^2}}.
\end{equation}
We define the quasi-local energy with respect to the observer determined by the pair $(\iota,u_a)$ to be
\begin{equation}\label{qlenergy}
E(\Sigma,\iota,u_a)
=\lim_{\varepsilon\to 0}\frac{1}{8\pi}\int_{\Sigma}\left(\mathcal{H}_0(\hat{e}_3,u_a)
-\mathcal{H}(\bar{e}_3,u_a)\right)dA,
\end{equation}
where the \textit{quasi-local Hamiltonian density} is
\begin{equation}
\mathcal{H}(\bar{e}_3,u_a)
=\sqrt{|\nabla_\partial u_a|^2+\varepsilon^2}\,\langle\vec{H},\bar{e}_3\rangle
+\alpha_{\bar{e}_3}(\nabla_\partial u_a)
\end{equation}
and $\mathcal{H}_0$ represents the same quantity for $\iota(\Sigma)\subset\mathbb{R}^{3,1}$ with $\{\hat{e}_3,\hat{e}_4\}$ denoting the frame given by equations \eqref{canonicalframe} in the Minkowski setting. In analogy with special relativity, the quasi-local mass is then set to be the infimum of energy over all admissible observers
\begin{equation}
\text{mass}(\Sigma)=\inf_{(\iota,u_a)}E(\Sigma,\iota,u_a).
\end{equation}
Due to the null trajectory of the observers, this mass may be interpreted as the difference of energy and norm of linear momentum from the 4-momentum vector as opposed to its Lorentz length; see the remark in \cite[pg. 1496]{Chenetal} for a related discussion of this discrepancy. 

A pair $(\iota,u_a)$ will be called \textit{admissible} for $\Sigma\subset N^{3,1}$ if the mean curvature vector of $\iota(\Sigma)$ is spacelike, and the regular level sets of $u_a$ are maximal in the following sense. Namely, there exists a compact spacelike hypersurface $\hat{\Omega}\subset\mathbb{R}^{3,1}$ with $\partial\hat{\Omega}=\iota(\Sigma)$, such that if any regular $s$-level set of $u_a$ has $n$ components then the Euler characteristic of the $s$-level fill-in with respect to $\hat{\Omega}$ satisfies $\chi(\hat{\Sigma}_s)=n$. Here, $\hat{\Sigma}_s$ denotes the set of points within $\hat{\Omega}$ satisfying the level set equation $s=-\mathbf{t}+a_i\mathbf{x}^i$. Furthermore, we will say that the \textit{dominant energy condition} holds for a spacetime if the Einstein equations are enforced with stress-energy tensor satisfying $T(\mathbf{v},\mathbf{w})\geq 0$ for all future-pointing causal vectors $\mathbf{v}$ and $\mathbf{w}$.

\begin{theorem}\label{thm1} 
Let $\Sigma$ be a closed spacelike 2-surface in a 4-dimensional spacetime satisfying the dominant energy condition, and assume that $\Sigma$ has an outward pointing spacelike mean curvature vector and bounds a compact spacelike hypersurface $\Omega$ with trivial second homology $H_2(\Omega;\mathbb{Z})=0$. The following statements hold with $\mathbf{a}\in\mathbb{R}^3$ satisfying $|\mathbf{a}|=1$.
\begin{enumerate}
\item 
If an isometric embedding $\iota:\Sigma\hookrightarrow\mathbb{R}^{3,1}$ together with function $u_a$ is admissible for $\Sigma$, then the quasi-local energy limit exists, is finite, and is nonnegative: $E(\Sigma,\iota,u_a)\geq 0$. 

\item
Under the same hypotheses, if $E(\Sigma,\iota,u_a)=0$ for all $\mathbf{a}$ then $(\Sigma,\sigma,\vec{H},\alpha)$ arises from Minkowski space. 

\item
If $\Sigma\subset\mathbb{R}^{3,1}$ is such that the inclusion map is admissible with some $u_a$, then the quasi-local mass vanishes: $\text{mass}(\Sigma)=0$.
\end{enumerate}
\end{theorem}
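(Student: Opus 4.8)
\emph{Strategy.} The plan is to obtain all three statements from a single analytic device: the level set (``spacetime Bochner'') identity for spacetime harmonic functions of \cite{HKK}, applied to the initial data set enclosed by $\Sigma$ and, in a degenerate form, to the Minkowski reference. Let $(\Omega,g,k)$ be the initial data induced on $\Omega$ by the ambient spacetime, with energy density $\mu=\tfrac12(\scal_g+(\tr_g k)^2-|k|^2)$ and momentum density $J=\divergence_g(k-(\tr_g k)g)$, so that the dominant energy condition reads $\mu\ge|J|$. First I would solve the Dirichlet problem for the spacetime harmonic equation $\Delta_g u+(\tr_g k)\,|\nabla u|=0$ on $\Omega$ with $u|_\Sigma=u_a$; the parameter $\varepsilon$ in \eqref{canonicalframe}--\eqref{qlenergy} is exactly the regularization needed to control the critical set of $u$, and the limit $\varepsilon\to0$ is taken only at the end.

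\emph{The physical identity.} Writing $\Sigma_t=u^{-1}(t)$, whose boundary circles $\partial\Sigma_t=u_a^{-1}(t)$ lie on $\Sigma$, I would integrate the identity of \cite{HKK} over $\Omega$, apply Gauss--Bonnet on each $\Sigma_t$, and reassemble the geodesic-curvature contributions via the coarea formula along $\Sigma$, to reach an equality of the schematic form
\[
\int_\Sigma \mathcal H(\bar e_3,u_a)\,dA
=\int_{t_{\min}}^{t_{\max}}2\pi\chi(\Sigma_t)\,dt
-\frac12\int_\Omega\!\left(\frac{|\overline\nabla^2 u|^2}{|\nabla u|}+2\mu|\nabla u|+2J(\nabla u)\right)dV+o_\varepsilon(1),
\]
where $t_{\min}=\min_\Sigma u_a$, $t_{\max}=\max_\Sigma u_a$, and $\overline\nabla^2 u=\nabla^2 u+|\nabla u|\,k$. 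The technical heart is the identification of the left-hand boundary integrand with the quasi-local Hamiltonian density: the canonical frame \eqref{canonicalframe} is precisely the Lorentz boost of the pair $\{\nu,n\}$ --- the outward $\Omega$-normal of $\Sigma$ and the future timelike normal of $\Omega$ --- selected by $\nabla u$; the term $\sqrt{|\nabla_\partial u_a|^2+\varepsilon^2}\,\langle\vec H,\bar e_3\rangle$ encodes $\partial_\nu|\nabla u|$ together with the regularized gradient length; and $\alpha_{\bar e_3}(\nabla_\partial u_a)$ carries the $k(\nabla u,\nu)$ term. Both bulk integrands are nonnegative, the momentum term because $|J(\nabla u)|\le|J|\,|\nabla u|\le\mu|\nabla u|$.

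\emph{The reference identity and part (1).} Running the same construction in $\mathbb{R}^{3,1}$ on the spacelike hypersurface $\hat\Omega$ supplied by admissibility, the crucial point is that the restriction to $\hat\Omega$ of the affine function $F=-\mathbf t+a_i\mathbf x^i$ is \emph{already} spacetime harmonic there, with $\overline\nabla^2(F|_{\hat\Omega})\equiv0$: its ambient Hessian vanishes, and since $|\mathbf a|=1$ its ambient gradient is null, which forces the tangential Hessian of $F|_{\hat\Omega}$ to equal $-|\nabla F|\,\hat k$; moreover its level sets in $\hat\Omega$ are exactly the fill-ins $\hat\Sigma_t$. Hence the reference identity carries no bulk term, $\int_\Sigma\mathcal H_0(\hat e_3,u_a)\,dA=\int_{t_{\min}}^{t_{\max}}2\pi\chi(\hat\Sigma_t)\,dt+o_\varepsilon(1)$. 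Because $H_2(\Omega;\mathbb Z)=0$, the regular level sets $\Sigma_t$ have no closed components (as in \cite{BKKS,HKK}), so a level set with $n_t$ boundary circles satisfies $\chi(\Sigma_t)\le n_t$, while admissibility gives $\chi(\hat\Sigma_t)=n_t$; thus $\chi(\Sigma_t)\le\chi(\hat\Sigma_t)$ for a.e.\ $t$. Subtracting the two identities and discarding the nonnegative bulk term yields
\[
8\pi\, E(\Sigma,\iota,u_a)=\lim_{\varepsilon\to0}\int_\Sigma(\mathcal H_0-\mathcal H)\,dA\;\ge\;2\pi\!\int_{t_{\min}}^{t_{\max}}\!\big(\chi(\hat\Sigma_t)-\chi(\Sigma_t)\big)\,dt\;\ge\;0,
\]
with finite right-hand side, so the limit exists and is finite; this establishes (1).

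\emph{Rigidity and the Minkowski case.} For (2), if $E(\Sigma,\iota,u_a)=0$ for all $\mathbf a$, then equality holds throughout the chain above for each $\mathbf a$, forcing $\overline\nabla^2 u_{\mathbf a}\equiv0$, $\mu|\nabla u_{\mathbf a}|+J(\nabla u_{\mathbf a})\equiv0$, and $\chi(\Sigma^{\mathbf a}_t)=\chi(\hat\Sigma^{\mathbf a}_t)$ a.e. From this two-parameter family of spacetime harmonic functions with vanishing modified Hessian one reconstructs an isometric embedding of $(\Omega,g,k)$ into $\mathbb{R}^{3,1}$ with second fundamental form $k$ --- the $u_{\mathbf a}$ become restrictions of affine null functions and, paired with a time function, assemble into the embedding, exactly as in the equality case of the spacetime positive mass theorem \cite{HKK} --- so that $\Sigma=\partial\Omega$, together with $(\sigma,\vec H,\alpha)$, arises from Minkowski space. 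For (3), the blanket hypotheses hold with ambient spacetime $\mathbb{R}^{3,1}$ (where the dominant energy condition is vacuous), so (1) gives $\text{mass}(\Sigma)=\inf E\ge0$; conversely, the inclusion $\iota_0\colon\Sigma\hookrightarrow\mathbb{R}^{3,1}$ with the given $u_a$ is admissible by hypothesis, and for it $\iota_0(\Sigma)=\Sigma$ carries the identical data $(\sigma,\vec H,\alpha)$ and hence the identical canonical frame, so $\mathcal H_0(\hat e_3,u_a)\equiv\mathcal H(\bar e_3,u_a)$ pointwise for every $\varepsilon$ and $E(\Sigma,\iota_0,u_a)=0$; therefore $\text{mass}(\Sigma)\le0$, and combining gives $\text{mass}(\Sigma)=0$. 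I expect the principal obstacle to be the first displayed identity --- establishing solvability and sufficient regularity of the spacetime harmonic Dirichlet problem in the presence of critical points, and then matching the Hirsch--Kazaras--Khuri boundary integrand (together with the reassembled geodesic-curvature terms) with $\mathcal H(\bar e_3,u_a)$ uniformly in $\varepsilon$ so that the limit may be taken --- while the reconstruction step in (2) is the secondary difficulty.
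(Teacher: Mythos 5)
Your overall route---solving the spacetime harmonic Dirichlet problem on $\Omega$, feeding the solution into the integral inequality of Hirsch--Kazaras--Khuri (Proposition \ref{integralidentity} here), comparing Euler characteristics of level sets via $H_2(\Omega;\mathbb{Z})=0$ and admissibility, and computing the reference term exactly in Minkowski space---is the paper's route. But there is one genuine gap in part \textit{(1)}: you assert that the canonical frame $\{\bar e_3,\bar e_4\}$ of \eqref{canonicalframe} is ``precisely the Lorentz boost of $\{\nu,\mathbf{n}\}$ selected by $\nabla u$,'' and on that basis you write your physical identity as an equality for $\int_\Sigma\mathcal{H}(\bar e_3,u_a)\,dA$. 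This identification is false. The boost selected by $\nabla u$ is the level set frame \eqref{primeframe}, whose hyperbolic angle is determined by the interior quantities $\nu(u)$ and $|\nabla u|$; the canonical frame's angle is determined by the purely boundary quantities $\Delta_\partial u_a$, $|\nabla_\partial u_a|$, and $\vec H$. The two agree (as $\varepsilon\to 0$) only when $\nabla^2u(\nu,\nu)+k(\nu,\nu)|\nabla u|=0$ along $\Sigma$, which holds for the affine null function on the Minkowski reference but not for a general solution on $\Omega$. Consequently the boundary terms produced by Proposition \ref{integralidentity} reassemble into $\mathcal{H}(e'_3,u_a)$, not $\mathcal{H}(\bar e_3,u_a)$, and your displayed equality does not hold as stated.

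The missing ingredient is the paper's Lemma \ref{lem2.4}: writing an arbitrary admissible frame in terms of its hyperbolic angle $f$ relative to $\vec H/|\vec H|$, the functional $e_3\mapsto\int_\Sigma\mathcal{H}(e_3,u_a)\,dA$ becomes $\int_\Sigma\left(\sqrt{|\nabla_\partial u_a|^2+\varepsilon^2}\,|\vec H|\cosh f-f\Delta_\partial u_a+\alpha_{\tilde e_3}(\nabla_\partial u_a)\right)dA$, which is convex in $f$ and minimized exactly at the canonical frame. Since the outward spacelike mean curvature hypothesis gives $\langle\vec H,e'_3\rangle>0$, this yields $\int_\Sigma\mathcal{H}(\bar e_3,u_a)\,dA\le\int_\Sigma\mathcal{H}(e'_3,u_a)\,dA$, and the inequality runs in the direction you need; your argument is therefore repairable by inserting this step, but without it the passage from the level set identity to the energy (which is defined through the canonical frame) is unjustified. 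The remaining parts track the paper: on the reference side the two frames do coincide in the limit because the affine null function has vanishing spacetime Hessian (this still requires the verification carried out in Lemma \ref{caseofeq}), part \textit{(3)} is the same observation as the paper's, and your sketch of part \textit{(2)} points to the correct reconstruction via the equality case of \cite{HKK}, though it omits establishing the global nonvanishing and linear independence of the gradients $\nabla u_{\mathbf a}$ that the Killing development argument requires.
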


\begin{remark}\label{remark1}
Nonnegativity of the energy holds in the more general circumstance of a disconnected $\Sigma$ and without the homology assumption on $\Omega$, when a suitable modification of the admissibility condition is enforced. This is discussed in more detail below at the end of Section \ref{sec4}.
\end{remark}

A model situation for an admissible pair $(\iota,u_a)$ occurs when $\Sigma$ is a topological 2-sphere, and its isometric image in Minkowski space bounds a 3-ball whose generic intersection with the null hyperplanes associated to $u_a$ is in the form of a disc. Therefore, this theorem yields positivity under the dominant energy condition for a large class of surfaces.
Moreover, because this admissibility condition is not based on a convexity property such as \eqref{convexity} which appears in \cite[Definition 5.1]{WangYau}, it is applicable beyond spheres to surfaces of positive genus. Beyond this difference with the Wang-Yau mass in terms of the admissibility conditions and range of applicability, there are two other immediate distinguishing characteristics between these two quasi-local masses. First, as explained in Section \ref{sec3}, a Hamilton-Jacobi analysis shows that energy \eqref{qlenergy} is measured by a null observer while the Wang-Yau energy is measured by a timelike observer. Secondly, while the proof of nonnegativity for the Wang-Yau mass relies on asymptotically flat extensions and ultimately the positive mass theorem, the proof here requires only quasi-local information.


A consequence of the proof of Theorem \ref{thm1} is that the limit may be evaluated explicitly in definition \eqref{qlenergy}. In particular, if $\tilde{\Sigma}$ denotes the open subset of $\Sigma$ on which $|\nabla_{\partial} u_a|\neq 0$ then we have
\begin{align}
\begin{split}
E(\Sigma,\iota,u_a)=&\frac{1}{8\pi}\int_{\tilde{\Sigma}}\left(\sqrt{|\vec{H}_0|^2|\nabla_\partial u_a|^2+(\Delta_\partial u_a)^2}+\nabla_{\partial}f_0\cdot\nabla_{\partial}u_a+\alpha_{\frac{\vec{H}_0}{|\vec{H}_0|}}(\nabla_{\partial}u_a)\right)dA\\
&-\frac{1}{8\pi}\int_{\tilde{\Sigma}}\left(\sqrt{|\vec{H}|^2|\nabla_\partial u_a|^2+(\Delta_\partial u_a)^2}+\nabla_{\partial}f\cdot\nabla_{\partial}u_a+\alpha_{\frac{\vec{H}}{|\vec{H}|}}(\nabla_{\partial}u_a)\right) dA,
\end{split}
\end{align}
where $\vec{H}_0$ is the mean curvature vector of the isometric embedding $\iota(\Sigma)$ and
\begin{equation}
f_{0}=\sinh^{-1}\left(\frac{\Delta_{\partial}u_a}{|\vec{H}_0||\nabla_{\partial}u_a|}\right),\qquad f=\sinh^{-1}\left(\frac{\Delta_{\partial}u_a}{|\vec{H}||\nabla_{\partial}u_a|}\right).    
\end{equation}

In addition to the attributes of positivity and rigidity, the new definition also behaves well with respect to asymptotic limits at spatial infinity. Let $(M,g,k)$ be a 3-dimensional initial data set for the Einstein equations, with $g$ denoting a Riemannian metric and $k$ a symmetric 2-tensor representing the extrinsic curvature in spacetime.
These objects satisfy the constraint equations
\begin{equation}
\mu=\frac{1}{2}\left(R_g +(\mathrm{Tr}_g k)^2 -|k|^2\right),\quad\quad
J=\mathrm{div}_g\left(k-(\mathrm{Tr}_g k)g\right),
\end{equation}
where $R_g$ is the scalar curvature and $\mu$ and $J$ represent the energy and momentum density of matter fields multiplied by $8\pi$. The data are asymptotically flat (with one end) if outside a compact set, $M$ is diffeomorphic to the compliment of a ball in Euclidean space $\mathbb{R}^3 \setminus B_1$, and in the coordinates provided by this diffeomorphism
\begin{equation}\label{asymflat}
|\partial^l (g_{ij}-\delta_{ij})(x)|=O(|x|^{-\tau-l})\quad\text{ } l=0,1,2,3,\quad\quad
|\partial^l k_{ij}(x)|=O(|x|^{-\tau-1-l})\quad\text{ } l=0,1,
\end{equation}
for some $\tau>\tfrac{1}{2}$. An extra third derivative of $g$ is included for control of isometric embeddings in the next result. The energy and momentum densities will be taken to be integrable $\mu, J \in L^1(M)$ so that the ADM energy and linear momentum are well-defined 
and given by
\begin{equation}
\mathcal{E}=\lim_{r\rightarrow\infty}\frac{1}{16\pi}\int_{S_{r}}\sum_i \left(g_{ij,i}-g_{ii,j}\right)\upsilon^j dA,\quad\quad
\mathcal{P}_i=\lim_{r\rightarrow\infty}\frac{1}{8\pi}\int_{S_{r}} \left(k_{ij}-(\mathrm{Tr}_g k)g_{ij}\right)\upsilon^j dA,
\end{equation}
where $\upsilon$ is the unit outer normal to the coordinate sphere $S_r$ of radius $r=|x|$ and $dA$ denotes its area element. The ADM mass is the Lorentz length of the ADM energy-momentum vector $(\mathcal{E},\mathcal{P})$. If the dominant energy condition is satisfied which implies that $\mu\geq |J|$, then the spacetime positive mass theorem \cite{EHLS,HKK,SchoenYau,Witten} asserts that the ADM energy-momentum is nonspacelike, and characterizes Minkowski space as the unique spacetime having asymptotically flat initial data with vanishing mass; see \cite{Lee} for a detailed account.

\begin{theorem}\label{thm2} 
Let $(M,g,k)$ be an asymptotically flat initial data set for the Einstein equations, and let $\iota_r :S_r \hookrightarrow\mathbb{R}^{3,1}$ denote the (unique up to Euclidean motion) isometric embedding of the $r$-coordinate sphere into 
a constant time slice $\mathbb{R}^3\subset\mathbb{R}^{3,1}$. Then for any $\mathbf{a}$, the asymptotic limit of quasi-local energies is given in terms of the ADM energy and linear momentum by
\begin{equation}
\lim_{r\to\infty}E(S_r,\iota_r,u_a)=\mathcal{E}-\langle \mathbf{a},\mathcal{P}\rangle.
\end{equation}
\end{theorem}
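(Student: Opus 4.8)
The plan is to evaluate the limit directly from the explicit form of $E(S_r,\iota_r,u_a)$ produced by the proof of Theorem~\ref{thm1}, after two preliminary simplifications. First, since $\iota_r(S_r)$ lies in the totally geodesic slice $\mathbb{R}^3\subset\mathbb{R}^{3,1}$ whose future timelike normal $\partial_{\mathbf t}$ is parallel, the reference connection one-form $\alpha_{\vec H_0/|\vec H_0|}$ vanishes identically and $|\vec H_0|=H_0$ equals the mean curvature of $\iota_r(S_r)$ in $\mathbb{R}^3$. Second, writing $P=|\nabla_\partial u_a|$ and $Q=\Delta_\partial u_a$, I would use the elementary primitive
\[
\frac{\partial}{\partial a}\!\left(\sqrt{a^2P^2+Q^2}-Q\sinh^{-1}\!\tfrac{Q}{aP}\right)=\frac{\sqrt{a^2P^2+Q^2}}{a}
\]
together with the integrations by parts $\int_{\tilde S_r}\nabla_\partial f\cdot\nabla_\partial u_a=-\int_{\tilde S_r}f\,\Delta_\partial u_a$ (and the analogue for $f_0$) to collapse the explicit formula to
\[
8\pi\,E(S_r,\iota_r,u_a)=\int_{\tilde S_r}\!\int_{|\vec H|}^{H_0}\frac{\sqrt{a^2P^2+Q^2}}{a}\,da\,dA\;-\;\int_{\tilde S_r}\alpha_{\vec H/|\vec H|}(\nabla_\partial u_a)\,dA .
\]
The goal is then to show the first integral tends to $\mathcal E$ and the second to $-\langle\mathbf a,\mathcal P\rangle$.

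For the first integral I would record the standard expansions on $S_r$ under \eqref{asymflat}: the induced metric $\sigma$ is $r^2$ times the round metric plus $O(r^{-\tau})$, the mean curvature of $S_r$ in $(M,g)$ is $H=\tfrac2r+O(r^{-1-\tau})$, and $\mathrm{Tr}_\sigma k=O(r^{-1-\tau})$ so that $|\vec H|=\sqrt{H^2-(\mathrm{Tr}_\sigma k)^2}=H+O(r^{-1-2\tau})$; by Nirenberg--Pogorelov (this is where the extra third derivative of $g$ in \eqref{asymflat} enters) the image $\iota_r(S_r)$ has mean curvature $H_0=\tfrac2r+O(r^{-1-\tau})$ and the pulled-back coordinates $\mathbf x^i\circ\iota_r$ converge, after rescaling by $r$, to the first spherical harmonics, so $Q=O(r^{-1})$, $P=O(1)$ and, crucially, $H_0^2P^2+Q^2=\tfrac4{r^2}+O(r^{-2-\tau})$. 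Hence the inner integrand equals $1+O(r^{-\tau})$ across the whole interval between $|\vec H|$ and $H_0$, of length $O(r^{-1-\tau})$, so $\int_{|\vec H|}^{H_0}\tfrac{\sqrt{a^2P^2+Q^2}}{a}\,da=(H_0-|\vec H|)\big(1+O(r^{-\tau})\big)$; integrated over $S_r$ (area $O(r^2)$), that error and the term $\int_{S_r}(|\vec H|-H)\,dA$ are $O(r^{1-2\tau})\to0$. Thus the first integral has the same limit as the Brown--York mass $\tfrac1{8\pi}\int_{S_r}(H_0-H)\,dA$, which converges to $\mathcal E$ by the Shi--Tam/Brown--York asymptotic analysis.

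For the second integral I would boost from the frame $\{\upsilon,n\}$, with $\upsilon$ the outward $g$-unit normal of $S_r$ in $M$ and $n$ the future $g$-unit normal of $M$, to the mean curvature gauge; the boost angle $\phi$ obeys $\tanh\phi=\pm\mathrm{Tr}_\sigma k/H=O(r^{-\tau})$, and a short computation gives $\alpha_{\vec H/|\vec H|}=\alpha_\upsilon-d\phi$ with $\alpha_\upsilon(X)=\mp k(\upsilon,X)$. Moreover $\nabla_\partial u_a$ tends, under $d\iota_r$, to the $S_r$-tangential part $(\mathbf a)^T=a_i(\partial_i-\upsilon_i\upsilon)$ of the constant vector $\mathbf a$, so that, using $\mathrm{Tr}_g k=k(\upsilon,\upsilon)+\mathrm{Tr}_\sigma k$,
\[
k(\upsilon,\nabla_\partial u_a)=a_i\big(k_{ij}-(\mathrm{Tr}_g k)g_{ij}\big)\upsilon^j+a_i\upsilon_i\,\mathrm{Tr}_\sigma k+(\text{lower order}).
\]
The first term integrates to $\langle\mathbf a,\mathcal P\rangle$ by the definition of the ADM linear momentum, while the a priori divergent remainder $\int_{S_r}a_i\upsilon_i\,\mathrm{Tr}_\sigma k\,dA\sim\tfrac1r\int_{S_r}u_a\,\mathrm{Tr}_\sigma k\,dA$ is cancelled exactly by the boost contribution $\int_{S_r}d\phi(\nabla_\partial u_a)\,dA=-\int_{S_r}\phi\,\Delta_\partial u_a\,dA$, since $\phi\sim-\mathrm{Tr}_\sigma k/H$ and $\Delta_\partial u_a\sim-\tfrac2{r^2}u_a$ force this to equal $\tfrac1r\int_{S_r}u_a\,\mathrm{Tr}_\sigma k\,dA$ with the opposite sign — the two cancel because $\alpha_\upsilon$ and $\phi$ carry the same sign convention for $k$. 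The overall sign of $k$, hence of the surviving term, is pinned down by consistency with $\mathcal E\ge|\mathcal P|$ in the positive mass theorem, yielding $-\langle\mathbf a,\mathcal P\rangle$. Adding the two limits gives $\lim_{r\to\infty}E(S_r,\iota_r,u_a)=\mathcal E-\langle\mathbf a,\mathcal P\rangle$.

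I expect the main obstacle to be making the third step rigorous: because $k$ (and hence $\mathrm{Tr}_\sigma k$, $\phi$, and the discrepancy $\nabla_\partial u_a-(\mathbf a)^T$) decays only at rate $r^{-1-\tau}$, resp. $r^{-\tau}$, with $\tau$ possibly below $1$, several intermediate integrals are a priori of divergent order $r^{1-\tau}$ or $r^{1-2\tau}$, and one needs the momentum constraint — the divergence structure $J=\mathrm{div}_g(k-(\mathrm{Tr}_g k)g)\in L^1$ that makes $\mathcal P$ well defined — together with integration by parts on $S_r$ and the $C^3$ control on $g$ used to fix $\iota_r$ and $u_a$ to the needed order, exactly the inputs underpinning the Brown--York limit invoked in the second step. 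One also has to handle the (measure-zero, mildly singular) set where $\nabla_\partial u_a=0$ when justifying the integration by parts, which is routine since the relevant integrands remain integrable there.
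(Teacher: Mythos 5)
Your proposal follows essentially the same route as the paper's Section 6: the reference connection one-form vanishes for the time-slice embedding; the square-root and $\sinh^{-1}$ terms are combined and reduced, using $|\nabla_\partial u_a|^2+\hat{\nu}(\hat{u})^2=1$ and $\Delta_\partial u_a=-H_0\,\hat{\nu}(\hat{u})$, to the Liu--Yau integrand $\int_{S_r}(|\vec{H}_0|-|\vec{H}|)\,dA\to 8\pi\mathcal{E}$ (the paper rationalizes the square-root difference and applies the mean value theorem to $\sinh^{-1}$ where you use the primitive $\int_{|\vec{H}|}^{H_0}a^{-1}\sqrt{a^2P^2+Q^2}\,da$ --- a tidier packaging of the same estimate, with the same $O(r^{1-2\tau})$ error bookkeeping); and the physical connection form yields the momentum term after exactly the cancellation the paper performs between $(\mathrm{Tr}_{S_r}k)\,\nu(u)$ and the gauge-angle contribution $h\,\Delta_\partial u_a$ with $h=-\sinh^{-1}(\mathrm{Tr}_{S_r}k/|\vec{H}|)$.

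The one step that is not actually an argument is your determination of the sign of the momentum term. Appealing to ``consistency with $\mathcal{E}\geq|\mathcal{P}|$'' cannot fix it: both $\mathcal{E}+\langle\mathbf{a},\mathcal{P}\rangle$ and $\mathcal{E}-\langle\mathbf{a},\mathcal{P}\rangle$ are nonnegative under the positive mass theorem, and the two candidate limits are interchanged by $\mathbf{a}\mapsto-\mathbf{a}$, so no positivity statement distinguishes them; yet the theorem asserts a definite sign for each fixed $\mathbf{a}$. The sign must instead be tracked through the conventions already in play --- $\alpha_{\nu}(X)=-k(X,\nu)$, the gauge-change identity $\alpha_{e_3}=\alpha_{\nu}-df$, the choice $u_a=\iota^*(-\mathbf{t}+a_i\mathbf{x}^i)$, and the definition of $\mathcal{P}_i$ --- as the paper does when it expands $\alpha_{\vec{H}/|\vec{H}|}(\nabla_\partial u_a)$ in terms of $(k-(\mathrm{Tr}_g k)g)(\nabla u,\nu)$ and integrates. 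Everything else in your outline (including your correct identification that the a priori divergent $O(r^{1-\tau})$ pieces must cancel, and that the $L^1$ hypothesis on $J$ and the $C^3$ control of $g$ are what make the surviving integrals converge) matches the paper's proof.
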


This paper is organized as follows. In the next section the main ideas behind the derivation of the energy will be explained, while in Section \ref{sec3} a physical interpretation will be given in terms of a Hamilton-Jacobi analysis.
Section \ref{sec4} is dedicated to the proof of nonnegativity, and Section \ref{sec5} deals with the rigidity statement of Theorem \ref{thm1}. Moreover, the asymptotic behavior of Theorem \ref{thm2} will be established in Section \ref{sec6}, 
while the equation associated with optimal isometric embedding is obtained in Section \ref{sec7}.

\smallskip

\noindent \textbf{Acknowledgements.} The authors would like to thank Mu-Tao Wang for helpful comments.

\section{Derivation of the Energy}
\label{sec2} \setcounter{equation}{0}
\setcounter{section}{2}

A motivation for the quasi-local energy comes from the level set technique and certain integral formulae involving \textit{spacetime harmonic functions} \cite{HKK}. Consider a compact initial data set $(\Omega,g,k)$. Recall that a function $u\in C^{2}(\Omega)$ is spacetime harmonic if it satisfies the equation
\begin{equation}
\Delta_g u+(\mathrm{Tr}_g k)|\nabla u|=0,
\end{equation}
and note that this arises as the trace of the \textit{spacetime Hessian}
\begin{equation}
\overline{\nabla}_{ij} u:=\nabla_{ij}u+k_{ij}|\nabla u|.
\end{equation}
The following inequality was established in \cite[Proposition 3.1]{AlaeeKhuriHung}.

\begin{proposition}\label{integralidentity}
Let $(\Omega,g,k)$ be a 3-dimensional compact initial data set with smooth boundary $\partial\Omega$, having outward unit normal $\nu$. Let $u:\Omega\to\mathbb{R}$ be a spacetime harmonic function which lies in $C^{2,\varsigma}(\Omega)$, $0<\varsigma<1$, and consider the open subset $\bar{\partial}\Omega$ of the boundary on which $|\nabla_{\partial} u|\neq 0$, where $\nabla_{\partial}u$ is the projection of the full gradient onto the boundary tangent space. 
If $\overline{u}$ and $\underline{u}$ are the maximum and minimum values of $u$ and $\Sigma_s =u^{-1}(s)$, then
\begin{align}\label{ident}
\begin{split}
&\int_{\partial\Omega}\left(k(\nabla_{\partial}u,\nu)-|\nabla u|H-\nu(u)\mathrm{Tr}_{\partial\Omega}k\right)dA\\
&+\int_{\bar{\partial} \Omega}\frac{|\nabla_{\partial}u|}{|\nabla u|}\nabla_{\partial}u\left(\frac{\nu(u)}{|\nabla_{\partial}u|}\right) dA+2\pi\int_{\underline{u}}^{\bar{u}}\chi(\Sigma_s)ds \\
&\geq\int_{\Omega}\left(\frac{1}{2}\frac{|\overline{\nabla}^2 u|^2}{|\nabla u|}+\mu|\nabla u|+J(\nabla u)\right)dV,
\end{split}
\end{align}
where $\chi(\Sigma_s)$ denotes the Euler characteristic, and $H$ is the mean curvature of the boundary with respect to $\nu$.
\end{proposition}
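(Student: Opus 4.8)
\emph{Proof proposal.} The plan is to run the level-set argument for spacetime harmonic functions (the scheme of \cite{Stern,BKKS,HKK}), adapted to a domain with boundary. First I would invoke Sard's theorem together with the $C^{2,\varsigma}$ regularity of $u$ to restrict attention to the full-measure set of regular values $s\in(\underline u,\overline u)$, for which $\Sigma_s=u^{-1}(s)$ is a smooth surface with boundary $\partial\Sigma_s\subset\partial\Omega$, meeting $\partial\Omega$ transversally on the subset where $|\nabla_\partial u|\neq 0$. To control the degenerate set $\{\nabla u=0\}$ I would carry out the whole computation with the regularized gradient $\varphi_\epsilon:=\sqrt{|\nabla u|^2+\epsilon^2}$ in place of $|\nabla u|$, and only let $\epsilon\to 0$ at the end, using elliptic estimates (and the smallness of the critical set) to see that the degenerate region contributes nothing to the limiting bulk and boundary integrals. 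The spine of the argument is a pointwise-in-$s$ inequality on $\Sigma_s$ which is then integrated over $s$ and reassembled by the coarea formula $\int_\Omega f|\nabla u|\,dV=\int_{\underline u}^{\overline u}\!\int_{\Sigma_s} f\,dA\,ds$.

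On a regular level set, with unit normal $\nu_s=\nabla u/|\nabla u|$, second fundamental form $A_s$, and mean curvature $H_s$, the Hessian decomposes as $\nabla^2 u(\nu_s,\nu_s)=\nu_s(|\nabla u|)$, $\nabla^2 u(\nu_s,\cdot)|_{T\Sigma_s}=\nabla_{\Sigma_s}|\nabla u|$, and $\nabla^2 u|_{T\Sigma_s}=|\nabla u|\,A_s$; tracing and using $\Delta_g u+(\mathrm{Tr}_g k)|\nabla u|=0$ gives $\nu_s(|\nabla u|)=-|\nabla u|(\mathrm{Tr}_g k+H_s)$. I would then combine, on $\Sigma_s$: (i) the Gauss equation, trading $|A_s|^2$ and $H_s^2$ for $2K_{\Sigma_s}$, $R_g$ and $\Ric(\nu_s,\nu_s)$; (ii) the Hamiltonian constraint $R_g=2\mu-(\mathrm{Tr}_g k)^2+|k|^2$ to introduce $\mu$; (iii) the level-set divergence identity for $\mathrm{div}_{\Sigma_s}\!\big(\nabla_{\Sigma_s}|\nabla u|/|\nabla u|\big)$, whose integral over $\Sigma_s$ is a pure boundary term; and (iv) Gauss--Bonnet $\int_{\Sigma_s}K_{\Sigma_s}+\int_{\partial\Sigma_s}\kappa_s=2\pi\chi(\Sigma_s)$. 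Completing the square in the Hessian — grouping $|\nabla u|^2|A_s|^2$, $\nu_s(|\nabla u|)^2$, $|\nabla_{\Sigma_s}|\nabla u||^2$, and the $k$-contributions, using $|\overline\nabla^2 u|^2=|\nabla^2 u|^2+2|\nabla u|\langle\nabla^2 u,k\rangle+|\nabla u|^2|k|^2$ — is precisely what manufactures the term $\tfrac12|\overline\nabla^2 u|^2/|\nabla u|$; while $\Ric(\nu_s,\nu_s)$ together with the momentum constraint $J=\mathrm{div}_g(k-(\mathrm{Tr}_g k)g)$, after an integration by parts along $\Omega$ (via coarea), produces the $J(\nabla u)$ term, again at the cost of a $\partial\Omega$-boundary integral. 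Integrating the resulting inequality over $s\in(\underline u,\overline u)$ yields the $2\pi\int\chi(\Sigma_s)\,ds$ and the stated bulk integral.

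The delicate part is the boundary bookkeeping. Every contribution above is an integral over $\partial\Sigma_s$, and each must be re-expressed through the geometry of $\partial\Omega$ and then summed over $s$ by the coarea formula on $u|_{\partial\Omega}$, namely $\int_{\underline u}^{\overline u}\!\int_{\partial\Sigma_s} g\,d\ell\,ds=\int_{\bar{\partial}\Omega} g\,|\nabla_\partial u|\,dA$. Concretely, the normal derivative $\nu_s(|\nabla u|)$ restricted to $\partial\Sigma_s$, the geodesic curvature $\kappa_s$ of $\partial\Sigma_s$ in $\Sigma_s$, and the tilt angle between $\nabla u$ and $\nu$ must be reorganized into $k(\nabla_\partial u,\nu)-|\nabla u|H-\nu(u)\mathrm{Tr}_{\partial\Omega}k$ together with the less standard term $\tfrac{|\nabla_\partial u|}{|\nabla u|}\nabla_\partial u\big(\tfrac{\nu(u)}{|\nabla_\partial u|}\big)$, which comes out of differentiating the tilt of the level sets as they meet $\partial\Omega$. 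I expect this geometric accounting at the curve $\partial\Sigma_s$ — and the accompanying $\epsilon\to0$ limit near $\{\nabla_\partial u=0\}$, where one must verify that these boundary integrands stay integrable — to be the main obstacle; by contrast, once the correct grouping is identified the interior completion-of-squares is essentially forced, albeit lengthy.
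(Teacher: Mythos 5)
The paper does not actually prove this proposition: it imports it verbatim from \cite[Proposition 3.1]{AlaeeKhuriHung}, whose proof (building on \cite{HKK}) follows exactly the level-set/Bochner scheme you describe. Your outline is correct and matches that argument in all essentials --- the regularization $\sqrt{|\nabla u|^2+\epsilon^2}$, Sard plus coarea, the Hessian decomposition along level sets, the traced Gauss equation and Gauss--Bonnet, the constraint substitutions producing $\mu$ and $J$, the completion of squares yielding $\tfrac12|\overline{\nabla}^2u|^2/|\nabla u|$, and the reorganization of the $\partial\Sigma_s$ data into the stated $\partial\Omega$ integrands --- though as written it is a plan rather than a proof: the boundary bookkeeping you correctly flag as the main obstacle is precisely the part the cited reference carries out in detail.
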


The right-hand side of \eqref{ident} is nonnegative if the dominant energy condition holds, and this suggests investigating the boundary terms in relation to a quasi-local energy. In order to better interpret the boundary expression with regards to spacetime geometry, assume that the data arise from a spacetime $(\Omega,g,k)\hookrightarrow N^{3,1}$ and let $\mathbf{n}$ be the associated unit timelike future directed normal vector field. Consider the frame $\{\nu,\mathbf{n}\}$ for the $SO(1,1)$ normal bundle over $\partial\Omega$, and observe that the mean curvature vector and an auxiliary vector field are given by
\begin{equation}
\vec{H}=H\nu -(\mathrm{Tr}_{\partial\Omega}k)\mathbf{n},\quad\quad\quad \vec{w}=|\nabla u|\nu +\nu(u)\mathbf{n},
\end{equation}
with $|\vec{w}|^2 =|\nabla_{\partial}u|^2$. Therefore, two of the boundary terms combine to form
\begin{equation}\label{0}
\langle\vec{H},\vec{w}\rangle= |\nabla u|H+\nu(u)\mathrm{Tr}_{\partial\Omega}k.  
\end{equation}

The remaining two boundary terms may be interpreted as follows. In the gauge determined by the current frame, the connection 1-form for the normal bundle is
\begin{equation}\label{1}
\alpha_{\nu}(X)=-k(X,\nu)=\langle \nabla_{X}^{N} \nu,\mathbf{n}\rangle,
\end{equation}
where $X$ is any tangent vector field to the boundary surface. Notice that with a change of gauge to the frame
determined by $e_3=a\nu + b\mathbf{n}$, $e_4 =b\nu +a\mathbf{n}$ with $a=\cosh f$, $b=\sinh f$ yields
\begin{equation}\label{2}
\alpha_{e_3}(X)=\langle\nabla_{X}^{N}e_3,e_4\rangle=a^2 \langle\nabla_{X}^{N}\nu,\mathbf{n}\rangle+b^2 \langle\nabla_{X}^{N}\mathbf{n},\nu\rangle 
+X(a)b-X(b)a=\alpha_{\nu}(X)-X(f),
\end{equation}
for any function $f\in C^1(\partial\Omega)$. Below, for simplicity of the discussion, we will \textit{assume that all calculations occur away from critical points} of $u$ restricted to the boundary. Then choosing $X=\nabla_{\partial}u$ and
$f=\sinh^{-1}\left(\nu(u)/|\nabla_{\partial}u|\right)$ produces
\begin{equation}\label{3}
X(f)=\nabla_{\partial}u\left(\sinh^{-1}\left(\frac{\nu(u)}{|\nabla_{\partial}u|}\right)\right)
=\frac{|\nabla_{\partial}u|}{|\nabla u|}\nabla_{\partial}u\left(\frac{\nu(u)}{|\nabla_{\partial}u|}\right).
\end{equation}
Hence, combining \eqref{1}, \eqref{2}, and \eqref{3} shows that
\begin{equation}\label{4}
\alpha_{e_3}(\nabla_{\partial}u)=\alpha_{\nu}(\nabla_{\partial}u)-\nabla_{\partial}u\left(\sinh^{-1}\left(\frac{\nu(u)}{|\nabla_{\partial}u|}\right)\right)
=-k(\nabla_{\partial}u,\nu)-\frac{|\nabla_{\partial}u|}{|\nabla u|}\nabla_{\partial}u\left(\frac{\nu(u)}{|\nabla_{\partial}u|}\right).
\end{equation}

Observe that the vector $\vec{w}$ is proportional to a special case (when $\varepsilon=0$) of the first member of the \textit{level set frame}
\begin{equation}
e'_3 =\frac{\sqrt{|\nabla u|^2+\varepsilon^2}}{\sqrt{|\nabla_{\partial}u|^2+\varepsilon^2}}\,\nu + \frac{\nu(u)}{\sqrt{|\nabla_{\partial}u|^2+\varepsilon^2}}\,\mathbf{n},\qquad e'_4= \frac{\nu(u)}{\sqrt{|\nabla_{\partial}u|^2+\varepsilon^2}}\,\nu +\frac{\sqrt{|\nabla {u}|^2+\varepsilon^2}}{\sqrt{|\nabla_{\partial}u|^2+\varepsilon^2}}\,\mathbf{n}.
\end{equation}
Here, the parameter $\varepsilon>0$ is included to avoid the technical issue of critical points for the restriction of $u$ to $\partial\Omega$. The computations \eqref{0} and \eqref{4} then motivate us to define the quasi-local Hamiltonian density as
\begin{equation}
\mathcal{H}(e'_3,u)
=\sqrt{|\nabla_\partial u|^2+\varepsilon^2}\,\langle\vec{H},e'_3\rangle
+\alpha_{e'_3}(\nabla_\partial u),
\end{equation}
with respect to the level set frame. The expression for quasi-local energy $E(\Sigma,\iota,u_a)$ in \eqref{qlenergy} follows by choosing an `optimal' frame, and comparing to an appropriate Hamiltonian density in the ground state.

\section{A Hamilton-Jacobi Interpretation of the Energy}
\label{sec3} \setcounter{equation}{0}
\setcounter{section}{3}

In this section we will describe the relationship between the quasi-local energy defined in the introduction, and a Hamilton-Jacobi analysis. Recall that Brown-York \cite{BrownYork} and Hawking-Horowitz \cite{HH} derived a Hamiltonian for closed spacelike 2-surfaces $\Sigma\hookrightarrow N^{3,1}$ which enclose a compact initial data set $(\Omega,g,k)$. As before let $\mathbf{n}$ be the unit timelike future directed normal to the data, and let $\nu$ be the unit outer normal to $\Sigma$ with respect to $\Omega$. If $T=\varphi \mathbf{n} +Y$ is a timelike vector field along $\Sigma$, representing an observer with lapse $\varphi$ and shift $Y$, then the surface Hamiltonian takes the form
\begin{equation}
\mathbf{H}(\Sigma,T,\mathbf{n})=-\frac{1}{8\pi}\int_{\Sigma}\left(\varphi H-k(\nu,Y)+(\text{Tr}_{g}k)g(\nu,Y)\right) dA,
\end{equation}
where $dA$ is the area element on $\Sigma$. As pointed out by Wang-Yau \cite{WangYau0}, this Hamiltonian may be reexpressed with the aid of the vector field
\begin{equation}
P=H\mathbf{n}+k(\nu)-(\text{Tr}_{g}k)\nu,
\end{equation} 
so that 
\begin{equation}
\mathbf{H}(\Sigma,T,\mathbf{n})=\frac{1}{8\pi}\int_{\Sigma}\langle P,T\rangle dA.
\end{equation}
Note that $P$ is perpendicular to the the mean curvature vector $\vec{H}=H\nu-(\text{Tr}_\Sigma k) \mathbf{n}$. 
The associated energy is then defined by choosing a reference Hamiltonian, determined by an isometric embedding
$\iota:\Sigma\hookrightarrow\mathbb{R}^{3,1}$ and corresponding vector fields $T_0$ and $\mathbf{n}_0$ along the image in Minkowski space, namely
\begin{equation}
\mathbf{E}(\Sigma)=\mathbf{H}(\Sigma,T,\mathbf{n})-\mathbf{H}_0(\iota(\Sigma),T_0,\mathbf{n}_0).
\end{equation} 

This notion of energy depends on the choices of $T$, $\mathbf{n}$ and $T_0$, $\mathbf{n}_0$, as well as on the isometric embedding. Observe that if the isometric embedding lands in a time slice $\mathbb{R}^3\subset\mathbb{R}^{3,1}$ with normal $\mathbf{n}_0$, and the choices $T=\mathbf{n}$ and $T_0=\mathbf{n}_0$ are made, then the typical expression for the 
the Brown-York mass is recovered; it depends on the initial data $\Omega$ and hence on $\mathbf{n}$. The Liu-Yau mass is obtained with the same prescription, except that $\mathbf{n}$ is taken to satisfy $\langle\vec{H},\mathbf{n}\rangle =0$. Furthermore, given an admissible time function $\tau$ on $\Sigma$ associated with an isometric embedding, the Wang-Yau energy is produced by setting
\begin{equation}
T_0 = \sqrt{1+|\nabla_{\partial}\tau|^2}\mathbf{n}_0 -\nabla_{\partial}\tau,
\end{equation}
and choosing $\mathbf{n}_0$ so that $\{\nu_0,\mathbf{n}_0\}$ is the unique frame for the normal bundle of $\iota(\Sigma)$ 
satisfying
\begin{equation}\label{canonicalframe0}
\langle\vec{H}_0,\nu_0\rangle>0,\qquad \quad \langle\vec{H}_0,\mathbf{n}_0\rangle=\frac{-\Delta_\partial \tau}
{\sqrt{1+|\nabla_\partial \tau|^2}},
\end{equation}    
while $T$ is set to have the same lapse and shift as $T_0$ and $\mathbf{n}$ is chosen to satisfy conditions analogous to \eqref{canonicalframe0} in $N^{3,1}$.

In order to apply these considerations to the quasi-local energy introduced in Section \ref{sec1}, let
$(\mathbf{t},\mathbf{x}^i)$, $i=1,2,3$ be coordinates for the reference Minkowski space, and pull back a linear null function
$u_a=\iota^*(-\mathbf{t}+a_i\mathbf{x}^i)$ to $\Sigma$. For each $\varepsilon>0$ we then set
\begin{equation}
T_0 = \sqrt{|\nabla_{\partial}u_a|^2 +\varepsilon^2}\mathbf{n}_0 +\nabla_{\partial}u_a,
\end{equation}
and choose $\mathbf{n}_0$ so that $\{\nu_0,\mathbf{n}_0\}$ is the unique frame for the normal bundle of $\iota(\Sigma)$ 
satisfying
\begin{equation}\label{canonicalframe2}
\langle\vec{H}_0,\nu_0\rangle>0,\qquad \quad \langle\vec{H}_0,\mathbf{n}_0\rangle=\frac{-\Delta_\partial u_a}
{\sqrt{|\nabla_{\partial}u_a|^2 +\varepsilon^2}}.
\end{equation}    
Moreover, $T$ is set to have the same lapse and shift as $T_0$, and $\mathbf{n}$ is chosen to satisfy conditions analogous to \eqref{canonicalframe2} in $N^{3,1}$. Note that $T$ and $T_0$ are timelike
with $|T|^2=|T_0|^2=-\varepsilon^2$, and are approaching null vectors as $\varepsilon\rightarrow 0$. Observe that writing
$\langle\vec{H},\nu\rangle=H$ and $\langle\vec{H},\mathbf{n}\rangle=\text{Tr}_\Sigma k$ gives rise to
\begin{align}
\begin{split}
\mathbf{H}^{\varepsilon}(\Sigma,T,\mathbf{n}):=&\frac{1}{8\pi}\int_{\Sigma}\langle P,T\rangle dA\\
=&\frac{1}{8\pi}\int_{\Sigma}\left(-H\sqrt{|\nabla_\partial u_a|^2+\varepsilon^2}+k(\nu,\nabla_\partial u_a)\right) dA\\
=&\frac{1}{8\pi}\int_{\Sigma}\left(-\sqrt{|\nabla_\partial u_a|^2+\varepsilon^2}\langle\vec{H},\nu\rangle-\alpha_{\nu}(\nabla_\partial u_a)\right) dA\\
=&-\frac{1}{8\pi}\int_{\Sigma}\mathcal{H}(\nu, u_a)  dA,
\end{split}
\end{align}
and similarly for the reference Hamiltonian. Therefore, the new quasi-local energy arises from Hamiltonians by taking a limit as the observer approaches a null direction
\begin{equation}
E(\Sigma,\iota,u_a)=\lim_{\varepsilon\rightarrow 0}\left(\mathbf{H}^{\varepsilon}(\Sigma,T,\mathbf{n})
-\mathbf{H}^{\varepsilon}_0(\iota(\Sigma),T_0,\mathbf{n}_0)\right),
\end{equation}
where in terms of the notation of the introduction we have $\{\bar{e}_3,\bar{e}_4\}=\{\nu,\mathbf{n}\}$ and
$\{\hat{e}_3,\hat{e}_4\}=\{\nu_0,\mathbf{n}_0\}$.

\section{Proof of Nonnegativity}
\label{sec4} \setcounter{equation}{0}
\setcounter{section}{4}

The purpose of the current section is to establish the inequality portion of Theorem \ref{thm1}. As before, given a linear null function $u_a$ on a spacelike 2-surface $\Sigma$ in spacetime $N^{3,1}$, consider the normal bundle frame 
$\{\bar{e}_3,\bar{e}_4\}$ defined by
\begin{equation}\label{canonicalframe1}
\langle\vec{H},\bar{e}_3\rangle>0,\qquad \quad \langle\vec{H},\bar{e}_4\rangle=\frac{-\Delta_\partial u_a}
{\sqrt{|\nabla_\partial u_a|^2+\varepsilon^2}},
\end{equation}
for $\varepsilon>0$. We begin with a preliminary result, similar to \cite[Proposition 2.1]{WangYau} for the Wang-Yau quasi-local energy, which demonstrates how this may be interpreted as an optimal frame.

\begin{lemma}\label{lem2.4}
If the mean curvature vector $\vec{H}$ of $\Sigma$ is  spacelike, and $\{e_3,e_4\}$ is any frame for the normal bundle of $\Sigma$ with the properties that $e_3$ is spacelike and $\langle\vec{H},e_3\rangle>0$, then
\begin{equation}
\int_{\Sigma}\mathcal{H}(e_3,u_a)dA\geq \int_{\Sigma}\mathcal{H}(\bar{e}_3,u_a) dA
\end{equation}
for all $\varepsilon>0$.
\end{lemma}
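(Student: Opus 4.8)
The plan is to fix an arbitrary frame $\{e_3,e_4\}$ with $e_3$ spacelike and $\langle\vec H,e_3\rangle>0$, and write it in terms of the target frame $\{\bar e_3,\bar e_4\}$ via a boost: $e_3=\cosh\theta\,\bar e_3+\sinh\theta\,\bar e_4$, $e_4=\sinh\theta\,\bar e_3+\cosh\theta\,\bar e_4$ for some function $\theta\in C^1(\Sigma)$ (both frames being positively oriented and having $e_3$, $\bar e_3$ on the same side as $\vec H$, so the Lorentz transformation relating them is a pure boost with no flip). Then I would compute the integrand $\mathcal H(e_3,u_a)-\mathcal H(\bar e_3,u_a)$ explicitly. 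Using $\vec H=\langle\vec H,\bar e_3\rangle\bar e_3-\langle\vec H,\bar e_4\rangle\bar e_4$ together with the defining relation \eqref{canonicalframe1} for $\langle\vec H,\bar e_4\rangle$, the mean-curvature term becomes
\[
\sqrt{|\nabla_\partial u_a|^2+\varepsilon^2}\bigl(\langle\vec H,e_3\rangle-\langle\vec H,\bar e_3\rangle\bigr)
=\sqrt{|\nabla_\partial u_a|^2+\varepsilon^2}\,\langle\vec H,\bar e_3\rangle(\cosh\theta-1)+\Delta_\partial u_a\,\sinh\theta,
\]
while the change-of-gauge formula \eqref{2}, $\alpha_{e_3}(X)=\alpha_{\bar e_3}(X)-X(\theta)$, gives $\alpha_{e_3}(\nabla_\partial u_a)-\alpha_{\bar e_3}(\nabla_\partial u_a)=-\nabla_\partial u_a(\theta)$. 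Hence
\[
\int_\Sigma\bigl(\mathcal H(e_3,u_a)-\mathcal H(\bar e_3,u_a)\bigr)dA
=\int_\Sigma\Bigl(\sqrt{|\nabla_\partial u_a|^2+\varepsilon^2}\,\langle\vec H,\bar e_3\rangle(\cosh\theta-1)+\Delta_\partial u_a\sinh\theta-\nabla_\partial u_a(\theta)\Bigr)dA.
\]

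The next step is to dispose of the last term by integration by parts on the closed surface $\Sigma$: $\int_\Sigma\nabla_\partial u_a(\theta)\,dA=-\int_\Sigma\theta\,\Delta_\partial u_a\,dA$, which cancels against $\int_\Sigma\Delta_\partial u_a\sinh\theta\,dA$ up to replacing $\sinh\theta$ by $\sinh\theta+\theta$... wait — more precisely, after integration by parts the $\theta$-linear terms combine into $\int_\Sigma\Delta_\partial u_a(\sinh\theta+\theta)\,dA$, which is not obviously signed. So instead I would not integrate by parts blindly; rather, observe that $\bar e_3$ is itself defined by \eqref{canonicalframe1}, and the claim is exactly that $\theta=0$ is the minimizer of the functional $\theta\mapsto\int_\Sigma\mathcal H(e_3^\theta,u_a)\,dA$ over all $C^1$ boosts. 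Compute the first variation: $\frac{d}{ds}\big|_{s=0}\int_\Sigma\mathcal H(e_3^{s\phi},u_a)\,dA=\int_\Sigma\bigl(\sqrt{|\nabla_\partial u_a|^2+\varepsilon^2}\langle\vec H,\bar e_3\rangle\sinh\theta+\Delta_\partial u_a\cosh\theta\bigr)_{|\theta=0}\phi\,dA-\int_\Sigma\nabla_\partial u_a(\phi)\,dA$ — and at $\theta=0$ this is $\int_\Sigma\Delta_\partial u_a\,\phi\,dA+\int_\Sigma\phi\,\Delta_\partial u_a\,dA$... which does not vanish, indicating $\bar e_3$ is not the critical point of this naive functional and that the correct comparison must keep the $-\nabla_\partial u_a(\theta)$ term and handle it together with the $\Delta_\partial u_a\sinh\theta$ term more carefully, exploiting that for the \emph{difference} only $\sinh\theta-\theta$ (an $O(\theta^3)$, sign-definite-in-a-neighborhood quantity) survives after integration by parts, paired with the manifestly nonnegative $\cosh\theta-1$. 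Concretely: after integration by parts, $\int_\Sigma(\mathcal H(e_3,u_a)-\mathcal H(\bar e_3,u_a))dA=\int_\Sigma\bigl(\sqrt{|\nabla_\partial u_a|^2+\varepsilon^2}\langle\vec H,\bar e_3\rangle(\cosh\theta-1)+\Delta_\partial u_a(\sinh\theta-\theta)\bigr)dA$ — hmm, but $\sinh\theta-\theta$ has the sign of $\theta$, not a fixed sign, so this still needs the first term to dominate.

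The honest way to finish, and the step I expect to be the main obstacle, is a pointwise inequality: show that for each fixed point of $\Sigma$ and each real $\theta$,
\[
\sqrt{|\nabla_\partial u_a|^2+\varepsilon^2}\,\langle\vec H,\bar e_3\rangle(\cosh\theta-1)+\Delta_\partial u_a(\sinh\theta-\theta)\ge 0,
\]
which fails in general unless one also uses that $|\Delta_\partial u_a|\le\sqrt{|\nabla_\partial u_a|^2+\varepsilon^2}\,\langle\vec H,\bar e_3\rangle$ — but that inequality is false too. The resolution must be that one does \emph{not} integrate $\nabla_\partial u_a(\theta)$ by parts at all; instead one writes the integrand as $F(\theta):=\sqrt{|\nabla_\partial u_a|^2+\varepsilon^2}\langle\vec H,\bar e_3\rangle(\cosh\theta-1)+\Delta_\partial u_a\sinh\theta-\nabla_\partial u_a(\theta)$ and argues the integral of this is nonnegative by recognizing $\int_\Sigma(\sqrt{|\nabla_\partial u_a|^2+\varepsilon^2}\langle\vec H,\bar e_3\rangle\cosh\theta+\Delta_\partial u_a\sinh\theta)\,dA=\int_\Sigma\langle\vec H,e_3\rangle\sqrt{|\nabla_\partial u_a|^2+\varepsilon^2}\,dA\ge\int_\Sigma\langle\vec H,e_3'\rangle\sqrt{\cdots}\,dA$ where $e_3'$ is the boost by the \emph{harmonic representative} minimizing the Dirichlet-type energy, so that I would instead: (i) note $\mathcal H(e_3,u_a)=\langle\vec w_\varepsilon,\text{(reflection of }\vec H)\rangle$-type expression and rewrite $\int_\Sigma\mathcal H(e_3,u_a)\,dA$ as a single "boosted mean curvature" integral $\int_\Sigma|\vec H|\sqrt{|\nabla_\partial u_a|^2+\varepsilon^2}\cosh(\theta-\theta_0)\,dA+(\text{a total divergence})$, where $\theta_0$ is the boost angle from $\{e_3,e_4\}$ to the frame aligned with $\vec H/|\vec H|$ and $\langle\vec H,e_4\rangle$-orthogonal direction; (ii) recognize that \eqref{canonicalframe1} picks $\bar e_3$ so that $\theta-\theta_0$ for $\bar e_3$ equals $\sinh^{-1}(\Delta_\partial u_a/(|\vec H|\sqrt{|\nabla_\partial u_a|^2+\varepsilon^2}))=:f$, the value that, together with the divergence term $-\nabla_\partial u_a(\theta)$, makes the integrand extremal; (iii) conclude by the elementary inequality $\cosh(b)-\cosh(a)\ge(\sinh a)(b-a)$ (convexity of $\cosh$) applied pointwise with $a=f$, $b=$ the angle for the competitor $e_3$, whose linear-in-$(b-a)$ remainder integrates against $\nabla_\partial u_a$ to zero by the divergence theorem because $\sinh f=\Delta_\partial u_a/(|\vec H|\sqrt{\cdots})$ is precisely the coefficient that converts $\int(\sinh f)(b-a)|\vec H|\sqrt{\cdots}=\int\Delta_\partial u_a\,(b-a)=-\int\nabla_\partial u_a(b-a)$ into the gauge term. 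Thus the whole difference equals $\int_\Sigma|\vec H|\sqrt{|\nabla_\partial u_a|^2+\varepsilon^2}\bigl(\cosh(b-f)-1-(b-f)\sinh\!... \bigr)$—no: it equals $\int_\Sigma|\vec H|\sqrt{\cdots}\,\bigl(\cosh b'-\cosh f-(b'-f)\sinh f\bigr)dA\ge0$ by strict convexity of $\cosh$, which is the clean finish. The crux is therefore step (i)–(ii): correctly identifying that the $\alpha_{e_3}(\nabla_\partial u_a)$ gauge term conspires with the $\varepsilon$-regularized $\Delta_\partial u_a$ to make $\{\bar e_3,\bar e_4\}$ the pointwise-plus-divergence optimal frame, exactly paralleling \cite[Proposition 2.1]{WangYau}; once that bookkeeping is set up, the convexity inequality for $\cosh$ closes the argument for every $\varepsilon>0$.
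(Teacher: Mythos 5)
Your final argument (steps (i)--(iii)) is correct and is essentially the paper's own proof: measure the boost angle from $\vec{H}/|\vec{H}|$, integrate the gauge term by parts into $-f\Delta_{\partial}u_a$, and minimize the resulting pointwise-convex functional of the angle, whose critical point is exactly the angle defining $\bar{e}_3$ via \eqref{theta1}. The mid-proof worry that $\bar{e}_3$ fails to be a critical point comes from a sign error in expanding $\langle\vec{H},e_3^{\theta}\rangle$ (the boost contributes $-\sinh\theta\,\Delta_{\partial}u_a$, after which the first variation at $\theta=0$ does vanish), and the pointwise inequality you feared was false does hold once the defining relation \eqref{canonicalframe1} is substituted --- it is precisely the convexity inequality $\cosh b\geq\cosh a+(b-a)\sinh a$ that you invoke at the end.
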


\begin{proof}
Consider the following functional which sends normal bundle frames to the real numbers
\begin{equation}\label{framemap}
\{e_3,e_4\} \mapsto\int_{\Sigma}\mathcal{H}(e_3,u_a)dA
=\int_{\Sigma}\left(\sqrt{|\nabla_\partial u_a|^2+\varepsilon^2}\,\langle\vec{H},e_3\rangle
+\alpha_{e_3}(\nabla_\partial u_a)\right)dA.
\end{equation}
Since the normal bundle of $\Sigma$ is rank 2 with structure group $SO(1,1)$, each frame may be given by a hyperbolic angle function. In particular, because $\vec{H}$ is  spacelike and $\langle\vec{H},e_3\rangle>0$, we may express the frame defined by the mean curvature vector as
\begin{equation}\label{eq1}
\tilde{e}_3:=\frac{\vec{H}}{|\vec{H}|}=(\cosh f) e_3+(\sinh f) e_4,\qquad \tilde{e}_4 :=(\sinh f) e_3+(\cosh f) e_4,
\end{equation}
for some $f\in C^{\infty}(\Sigma)$. It follows that
\begin{equation}\label{aoihfoinahg}
\langle\vec{H},{e}_3\rangle=|\vec{H}|\cosh f,\qquad \langle\vec{H},{e}_4\rangle=-|\vec{H}|\sinh f.
\end{equation} 
Furthermore, using equations \eqref{2} and \eqref{eq1} produces
\begin{equation}
\alpha_{e_3}(\nabla_{\partial}u_a)=\alpha_{\tilde{e}_3}(\nabla_{\partial}u_a)+\nabla_{\partial}u_a\cdot\nabla_{\partial} f.
\end{equation}
Therefore, the functional of \eqref{framemap} can be rewritten as
\begin{align}\label{h1}
\begin{split}	f \mapsto &\int_{\Sigma}\left(\sqrt{|\nabla_{\partial}u_a|^2+\varepsilon^2}|\vec{H}|\cosh f+\nabla_{\partial}u_a\cdot\nabla_{\partial} f+\alpha_{\tilde{e}_3}(\nabla_{\partial}u_a)\right) dA\\
=&\int_{\Sigma}\left(\sqrt{|\nabla_{\partial}u_a|^2+\varepsilon^2}|\vec{H}|\cosh f-f\Delta_{\partial}u_a+\alpha_{\tilde{e}_3}(\nabla_{\partial}u_a)\right) dA.
\end{split}
\end{align}
Since $|\vec{H}|>0$ this functional is convex, and it may be easily checked that the minimum occurs when 
\begin{equation}\label{theta1}
|\vec{H}|\sinh f=\frac{\Delta_{\partial}u_a}{\sqrt{|\nabla_{\partial}u_a|^2+\varepsilon^2}}.
\end{equation}
Hence, \eqref{aoihfoinahg} shows that the minimum is achieved at the frame $\{\bar{e}_3,\bar{e}_4\}$. 
\end{proof}

Let $(\Omega, g,k)$ be the initial data set for a compact spacelike hypersurface in $N^{3,1}$ which is enclosed by $\Sigma$, and let $u_a \in C^{\infty}(\Sigma)$. Consider the unique solution of the spacetime harmonic Dirichlet problem
\begin{equation}\label{q1}
\Delta_g u+(\text{Tr}_{g}k)|\nabla u|=0\quad \text{ in }\Omega,\qquad u=u_a\quad \text{ on }\partial\Omega=\Sigma.
\end{equation}
The existence of a unique solution $u\in C^{2,\varsigma}(\Omega)$ for any $\varsigma\in(0,1)$, follows in a straightforward manner from the results of \cite[Section 4.1]{HKK}. We may then define a level set frame for the normal bundle of $\Sigma$ by
\begin{equation}\label{primeframe}
{e}'_3=\frac{\sqrt{|\nabla u|^2+\varepsilon^2}}{\sqrt{|\nabla_{\partial}u_a|^2+\varepsilon^2}}\nu + \frac{\nu(u)}{\sqrt{|\nabla_{\partial}u_a|^2+\varepsilon^2}}\mathbf{n},\qquad {e}'_4= \frac{\nu(u)}{\sqrt{|\nabla_{\partial}u_a|^2+\varepsilon^2}}\nu +\frac{\sqrt{|\nabla {u}|^2+\varepsilon^2}}{\sqrt{|\nabla_{\partial}u_a|^2+\varepsilon^2}}\mathbf{n},
\end{equation}
for each $\varepsilon>0$ and where $\{\nu,\mathbf{n}\}$ is the normal bundle frame determined by $\Omega$ in which $\nu$ is the outer normal to $\partial\Omega$ and $\mathbf{n}$ is future directed timelike.
Note that since the mean curvature vector of $\Sigma$ is outward pointing spacelike, we have 
\begin{equation}
H=\langle \vec{H},\nu\rangle>|\langle\vec{H},\mathbf{n}\rangle|=|\mathrm{Tr}_{\Sigma}k|
\end{equation}
and therefore
\begin{align}
\begin{split}
\langle\vec{H},{e}'_3\rangle=&\frac{\sqrt{|\nabla u|^2+\varepsilon^2}}{\sqrt{|\nabla_{\partial} \hat{u}_a|^2+\varepsilon^2}}H +\frac{\nu(u)}{\sqrt{|\nabla_{\partial} \hat{u}_a|^2+\varepsilon^2}}\text{Tr}_{\Sigma}k\\
>& \frac{\sqrt{|\nabla u|^2+\varepsilon^2}}{\sqrt{|\nabla_{\partial} \hat{u}_a|^2+\varepsilon^2}} |\text{Tr}_{\Sigma}k| +\frac{\nu(u)}{\sqrt{|\nabla_{\partial} \hat{u}_a|^2+\varepsilon^2}}\text{Tr}_{\Sigma}k\\
\geq &\frac{|\nu(u)|}{\sqrt{|\nabla_{\partial} \hat{u}_a|^2+\varepsilon^2}} |\text{Tr}_{\Sigma}k| +\frac{\nu(u)}{\sqrt{|\nabla_{\partial} \hat{u}_a|^2+\varepsilon^2}}\text{Tr}_{\Sigma}k\\
\geq & 0.
\end{split}
\end{align}
This allows for an application of Lemma \ref{lem2.4} to conclude that
\begin{equation}\label{aoihfjoqihoihnq}
\int_{\Sigma}\mathcal{H}({e}'_3,u_a)\, dA\geq \int_{\Sigma}\mathcal{H}(\bar{e}_3,u_a)\, dA.
\end{equation}
Inequality \eqref{aoihfjoqihoihnq}, together with the following estimate, will be used together to show nonnegativity of the quasi-local energy.

\begin{lemma}\label{lemma2.3}
Let $(\Omega, g,k)$ be initial data for a compact spacelike hypersurface with boundary $\partial\Omega=\Sigma$ in spacetime $N^{3,1}$, and let $u_a \in C^{\infty}(\Sigma)$ and $\varepsilon>0$. If $u\in C^{2,\varsigma}(\Omega)$ is the spacetime harmonic solution of \eqref{q1} with associated level set frame $\{e'_3 ,e'_4\}$, then
\begin{equation}\label{mainin}
\lim_{\varepsilon\to 0}\int_{\Sigma}\mathcal{H}(e'_3,u_a) dA+\int_{\Omega}\left(\frac{1}{2}\frac{|{\nabla}^2 u+k|\nabla u||^2}{|\nabla u|}+\mu|\nabla u|+J(\nabla u)\right) dV\leq 2\pi \int_{\underline{u}}^{\overline{u}}\chi(\Sigma_s)ds,
\end{equation}
where $\Sigma_s=u^{-1}(s)$ and $\overline{u}$, $\underline{u}$ represent the maximum and minimum values of $u$. Furthermore, the limit in this expression exists and is finite. 
\end{lemma}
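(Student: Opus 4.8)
The plan is to derive \eqref{mainin} essentially as a reformulation of the integral identity \eqref{ident} from Proposition \ref{integralidentity}, rewriting its boundary terms in the language of the normal bundle frame $\{\nu,\mathbf{n}\}$ and then passing from $\nu$ to the level set frame $e_3'$. First I would apply Proposition \ref{integralidentity} to the spacetime harmonic solution $u$ of \eqref{q1}, which gives, for each fixed configuration, that $2\pi\int_{\underline u}^{\overline u}\chi(\Sigma_s)\,ds$ dominates the bulk integral plus the boundary integral $\int_{\partial\Omega}\bigl(k(\nabla_\partial u,\nu)-|\nabla u|H-\nu(u)\operatorname{Tr}_{\partial\Omega}k\bigr)dA+\int_{\bar\partial\Omega}\tfrac{|\nabla_\partial u|}{|\nabla u|}\nabla_\partial u\bigl(\tfrac{\nu(u)}{|\nabla_\partial u|}\bigr)dA$. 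Note that the bulk term here is exactly $\int_\Omega\bigl(\tfrac12\tfrac{|\nabla^2 u+k|\nabla u||^2}{|\nabla u|}+\mu|\nabla u|+J(\nabla u)\bigr)dV$, so the bulk side of \eqref{mainin} is already in place.

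The next step is to identify the boundary terms with $\int_\Sigma\mathcal H(e_3',u_a)\,dA$ in the $\varepsilon\to 0$ limit. Using \eqref{0}, the combination $|\nabla u|H+\nu(u)\operatorname{Tr}_{\partial\Omega}k$ equals $\langle\vec H,\vec w\rangle$ where $\vec w=|\nabla u|\nu+\nu(u)\mathbf n$ has $|\vec w|^2=|\nabla_\partial u|^2$; and using \eqref{1}--\eqref{4}, the remaining boundary terms combine into $-\alpha_{e_3}(\nabla_\partial u)$ for the frame rotated by $f=\sinh^{-1}(\nu(u)/|\nabla_\partial u|)$ away from critical points. Thus at $\varepsilon=0$ the boundary integrand is formally $-\bigl(|\nabla_\partial u|\langle\vec H, e_3\rangle+\alpha_{e_3}(\nabla_\partial u)\bigr)=-\mathcal H(e_3',u_a)|_{\varepsilon=0}$, since $\vec w/|\vec w|$ is precisely $e_3'|_{\varepsilon=0}$ on $\tilde\Sigma$ and $\langle\vec H,\vec w\rangle=|\nabla_\partial u|\langle\vec H, e_3'\rangle$. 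For general $\varepsilon>0$ the level set frame $e_3'$ in \eqref{primeframe} is the natural regularization, and on $\Sigma$ one has $\mathcal H(e_3',u_a)=\tfrac{1}{\sqrt{|\nabla_\partial u_a|^2+\varepsilon^2}}\bigl(|\nabla u|^2 H+\sqrt{|\nabla u|^2+\varepsilon^2}\,\nu(u)\operatorname{Tr}_\Sigma k\bigr)+\alpha_{e_3'}(\nabla_\partial u_a)$ after expanding; I would show this converges as $\varepsilon\to 0$ to the $\varepsilon=0$ boundary integrand above. Rearranging Proposition \ref{integralidentity} and taking the limit then yields \eqref{mainin}.

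The main obstacle, and the reason the statement is phrased with a limit rather than a plain inequality, is the behavior near critical points of $u|_\Sigma$, i.e. on $\Sigma\setminus\tilde\Sigma$. The rotation function $f=\sinh^{-1}(\nu(u)/|\nabla_\partial u|)$ blows up where $|\nabla_\partial u_a|\to 0$, so the identification $\mathcal H(e_3')=\langle\vec H,\vec w\rangle+\alpha$ must be handled carefully: one keeps $\varepsilon>0$ so that $\sqrt{|\nabla_\partial u_a|^2+\varepsilon^2}$ stays bounded below, controls the term $\alpha_{e_3'}(\nabla_\partial u_a)$ using \eqref{2} with $b/a=\nu(u)/\sqrt{|\nabla_\partial u_a|^2+\varepsilon^2}$ and the $C^{2,\varsigma}$ regularity of $u$ (so $\nu(u)$, $|\nabla u|$ are bounded and the derivative terms are integrable), and then invokes dominated convergence. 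Since $\mathcal H(e_3',u_a)$ is bounded uniformly in $\varepsilon$ on all of $\Sigma$ (the $\varepsilon$-regularized denominators are bounded below, all curvature quantities of $\Sigma$ and $u$ are bounded), the limit of the boundary integral exists and is finite; combined with finiteness of the bulk integral and of $2\pi\int_{\underline u}^{\overline u}\chi(\Sigma_s)\,ds$ (finite since the set of critical values has measure zero and $\chi(\Sigma_s)$ is bounded by Morse-theoretic considerations on the compact $\Sigma$), this gives the final finiteness claim. A secondary technical point to address is that Proposition \ref{integralidentity} is stated for fixed data and does not itself contain $\varepsilon$; the $\varepsilon$ enters only through the definition of $e_3'$ and the regularized frame condition \eqref{canonicalframe1}, so one applies the proposition once and then tracks $\varepsilon$-dependence purely on the boundary side.
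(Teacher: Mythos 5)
Your proposal is correct and follows essentially the same route as the paper: apply Proposition \ref{integralidentity} to the spacetime harmonic solution once, rewrite its boundary terms via \eqref{0}--\eqref{4} as $-\mathcal{H}(e_3',u_a)$ in the level set frame, and use the $\varepsilon$-regularization plus dominated convergence to handle critical points of $u|_\Sigma$ and justify existence and finiteness of the limit. The one slip is your displayed expansion of $\mathcal{H}(e_3',u_a)$: from \eqref{primeframe} one gets $\sqrt{|\nabla_\partial u_a|^2+\varepsilon^2}\,\langle\vec H,e_3'\rangle=\sqrt{|\nabla u|^2+\varepsilon^2}\,H+\nu(u)\operatorname{Tr}_\Sigma k$, not the expression you wrote (yours carries a spurious factor and would not converge to the $\varepsilon=0$ boundary integrand); with the corrected formula the rest of your argument goes through exactly as in the paper.
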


\begin{proof}
Let $f_{\varepsilon}\in C^{\infty}(\Sigma)$ be such that
\begin{equation}
\cosh f_{\varepsilon}=\frac{\sqrt{|\nabla u|^2+\varepsilon^2}}{\sqrt{|\nabla_{\partial}u_a|^2+\varepsilon^2}},\qquad 
\sinh f_{\varepsilon}=\frac{\nu(u)}{\sqrt{|\nabla_{\partial}u_a|^2+\varepsilon^2}}, 
\end{equation}
then \eqref{1} and \eqref{2} imply
\begin{equation}
\alpha_{e'_3}(\nabla_{\partial}u_a)=\alpha_{\nu}(\nabla_{\partial}u_a)-\nabla_{\partial}u_a\cdot\nabla_{\partial} f_{\varepsilon}
=-k(\nabla_{\partial}u_a ,\nu)-\nabla_{\partial}u_a\cdot\nabla_{\partial} f_{\varepsilon}.
\end{equation}
It follows that
\begin{align}
\begin{split}
\mathcal{H}(e'_3,u_a)
=&\sqrt{|\nabla_{\partial}u_a|^2+\varepsilon^2}\langle\vec{H},{e}'_3\rangle
+\alpha_{e'_3}(\nabla_{\partial}u_a)\\
=&\sqrt{|\nabla u|^2+\varepsilon^2}H +\nu(u)\text{Tr}_{\Sigma}k-k(\nabla_{\partial}u_a,\nu)-\nabla_{\partial}u_a\left(\sinh^{-1}\frac{\nu(u)}{\sqrt{|\nabla_{\partial}u_a|^2+\varepsilon^2}}\right)\\
=&\sqrt{|\nabla u|^2 \!+\!\varepsilon^2}H \!+\!\nu(u)\text{Tr}_{\Sigma}k \!-\! k(\nabla_{\partial}u_a,\nu) \!-\!\frac{\sqrt{|\nabla_{\partial}u_a|^2 \!+\!\varepsilon^2}}{\sqrt{|\nabla{u}|^2 \!+\!\varepsilon^2}}\nabla_{\partial}u_a\left(\!\frac{\nu(u)}{\sqrt{|\nabla_{\partial}u_a|^2 \!+\!\varepsilon^2}}\!\right).
\end{split}
\end{align} 
Since the last term in this expression may be estimated by
\begin{equation}
\frac{\sqrt{|\nabla_{\partial}u_a|^2 +\varepsilon^2}}{\sqrt{|\nabla{u}|^2 +\varepsilon^2}}
\Bigg|\frac{\nabla_{\partial}u_a\cdot\nabla_{\partial}\left(\nu(u)\right)}{\sqrt{|\nabla_{\partial}u_a|^2 +\varepsilon^2}}
-\frac{\nu(u)\nabla_{\partial}^2u_a(\nabla_{\partial}u_a,\nabla_{\partial}u_a)}{\left(|\nabla_{\partial}u_a|^2+\varepsilon^2\right)^{3/2}}\Bigg|
\leq|\nabla^2 u|+|II||\nabla_{\partial}u_a|+|\nabla_{\partial}^2 u_a|,
\end{equation}
where $II$ is the second fundamental form of $\Sigma$ as a submanifold of $(\Omega,g)$, we may apply the dominated convergence theorem to conclude that the relevant limit exists, is finite, and satisfies
\begin{align}\label{aohfoaihoghiqphj}
\begin{split}
\lim_{\varepsilon\to 0}\int_{\Sigma}\mathcal{H}(e'_3,u_a) dA
=&\int_{\partial\Omega}\left(|\nabla u|H+\nu(u)\text{Tr}_\Sigma k -k(\nabla_{\partial} u_a,\nu)\right)dA\\
&-\int_{\bar{\partial}\Omega}\frac{|\nabla_{\partial}u_a|}{|\nabla u|}\nabla_{\partial}u_a\left(\frac{\nu(u)}{|\nabla_{\partial}u_a|}\right)dA.       
\end{split}
\end{align}
Here $\bar{\partial}\Omega$ denotes the open set of points within $\partial\Omega$ on which $|\nabla_{\partial}u_a|\neq 0$.
The desired result now follows from Proposition \ref{integralidentity}.
\end{proof}

\begin{proof}[Proof of Theorem \ref{thm1}: Nonnegativity]
A direct application of \eqref{aoihfjoqihoihnq}, Lemma \ref{lemma2.3}, and Lemma \ref{caseofeq} (in the next section)
yields
\begin{align}\label{apooihgoiqhoihj}
\begin{split}
E(\Sigma,\iota,u_a)
=&\lim_{\varepsilon\rightarrow 0}\frac{1}{8\pi}\int_{\Sigma}\left(\mathcal{H}_0(\hat{e}_3,u_a)-\mathcal{H}(\bar{e}_3,u_a)\right)dA\\
\geq & \lim_{\varepsilon\rightarrow 0}\frac{1}{8\pi}\int_{\Sigma}\left(\mathcal{H}_0(\hat{e}_3,u_a)-
\mathcal{H}({e}'_3,u_a)\right)dA\\
\geq & \int_{\Omega}\left(\frac{1}{2}\frac{|{\nabla}^2 u+k|\nabla u||^2}{|\nabla u|}+\mu|\nabla u|+J(\nabla u)\right) dV\\
&+\frac{1}{4} \int_{\underline{u}_a}^{\overline{u}_a}\chi(\hat{\Sigma}_s)ds -\frac{1}{4} \int_{\underline{u}}^{\overline{u}}\chi(\Sigma_s)ds,
\end{split}
\end{align}
where $\hat{\Sigma}_s$ are the level sets of the relevant null linear function in Minkowski space restricted to the fill-in $\hat{\Omega}$ of $\iota(\Sigma)$. By the maximum principle for \eqref{q1} we find that $\overline{u}=\overline{u}_a$ and $\underline{u}=\underline{u}_a$. Furthermore, analogous arguments to those used in \cite[Proposition 5.2]{HKK} show that the trivial homology hypothesis $H_2(\Omega;\mathbb{Z})=0$ guarantees that each component of any regular level $\Sigma_s$ for $u$ must intersect the boundary $\partial\Omega$. Hence, $\chi(\Sigma_s)\leq n$ where $n$ is the number of components of the $s$-level set for $u_a=u|_{\partial\Omega}$. On the other hand, the admissibility condition ensures that $\chi(\hat{\Sigma}_s)=n$, so the difference of Euler characteristic integrals in \eqref{apooihgoiqhoihj} is nonnegative.
The dominant energy condition $\mu\geq|J|$ then gives $E(\Sigma,\iota,u_a)\geq 0$.

It remains to show that the limit defining the quasi-local energy exists and is finite. From the proof of Lemma \ref{lem2.4} it follows that
\begin{align}\label{oqignoihihniqh}
\begin{split}
\mathcal{H}(\bar{e}_3,u_a)=&\sqrt{|\vec{H}|^2 (|\nabla_{\partial} u_a|^2 +\varepsilon^2)+(\Delta_{\partial}u_a)^2}+\alpha_{\tilde{e}_3}(\nabla_{\partial}u_a)\\
&+\nabla_{\partial}u_a \cdot \nabla_{\partial}\left(\sinh^{-1}\frac{\Delta_{\partial}u_a}{|\vec{H}|\sqrt{|\nabla_{\partial} u_a|^2 +\varepsilon^2}}\right) .
\end{split}
\end{align}
The last term in this expression may be estimated by
\begin{align}\label{oiqhoihgoqih}
\begin{split}
&\Bigg|\frac{\nabla_{\partial}u_a (\Delta_{\partial}u_a)-(\Delta_{\partial}u_a)\nabla_{\partial}u_a(\log|\vec{H}|)\!-\!(\Delta_{\partial}u_a)\nabla^2_{\partial}u_a(\nabla_{\partial}u_a,\!\nabla_{\partial}u_a)(|\nabla_{\partial} u_a|^2 \!+\!\varepsilon^2)^{-1}}
{\sqrt{|\vec{H}|^2 (|\nabla_{\partial} u_a|^2 \!+\!\varepsilon^2)\!+\!(\Delta_{\partial}u_a)^2}}\Bigg|\\
\leq& |\vec{H}|^{-1}|\nabla_{\partial}^3 u_a|+|\nabla_{\partial}u_a||\nabla_{\partial}\log|\vec{H}||+|\nabla_{\partial}^2 u_a|.
\end{split}
\end{align}
We may then apply the dominated convergence theorem to find that the limit exists, is finite, and satisfies
\begin{align}\label{ofnqoignoiwnqhoinq}
\begin{split}
\lim_{\varepsilon\rightarrow 0}\int_{\Sigma}\mathcal{H}(\bar{e}_3,u_a)dA
=&\int_{\Sigma}\left(\sqrt{|\vec{H}|^2 |\nabla_{\partial} u_a|^2 \!+\!(\Delta_{\partial}u_a)^2}+\alpha_{\tilde{e}_3}(\nabla_{\partial}u_a)\right)dA\\
&+\int_{\tilde{\Sigma}}\nabla_{\partial}u_a \cdot \nabla_{\partial}\left(\sinh^{-1}\frac{\Delta_{\partial}u_a}{|\vec{H}||\nabla_{\partial} u_a|}\right)dA,
\end{split}
\end{align}
where $\tilde{\Sigma}$ denotes the open subset of $\Sigma$ on which $|\nabla_{\partial}u_a|\neq 0$. Similar arguments hold for the limit of the reference Hamiltonian.
\end{proof}

\begin{proof}[Nonnegativity with a disconnected surface]
In Remark \ref{remark1}, it was stated that nonnegativity of the energy holds in the more general circumstance of a disconnected $\Sigma$ and without the homology assumption on $\Omega$, when a suitable modification of the admissibility condition is enforced. More precisely, in this setting we may define a pair $(\iota,u_a)$ to be \textit{admissible} for $\Sigma\subset N^{3,1}$ if the mean curvature vector of $\iota(\Sigma)$ is outward pointing spacelike, and there exist compact spacelike hypersurfaces $\hat{\Omega}\subset\mathbb{R}^{3,1}$, $\Omega\subset N^{3,1}$ with $\partial\hat{\Omega}=\iota(\Sigma)$, $\partial\Omega=\Sigma$ such that 
\begin{equation}
\int_{\underline{u}_a}^{\overline{u}_a}\left(\chi(\hat{\Sigma}_s)-\chi(\Sigma_s)\right)ds \geq 0,
\end{equation}
where the level sets $\hat{\Sigma}_s$, $\Sigma_s$ are defined as above. Since \eqref{aoihfjoqihoihnq}, Lemma \ref{lemma2.3}, and Lemma \ref{caseofeq} continue to hold under the more general hypotheses presented here, inequality \eqref{apooihgoiqhoihj} again implies that $E(\Sigma,\iota,u_a)\geq 0$.
\end{proof}

\section{Proof of Rigidity}
\label{sec5} \setcounter{equation}{0}
\setcounter{section}{5}

The purpose of the current section is to establish the rigidity statement of Theorem \ref{thm1}. We will first compute the surface Hamiltonian for surfaces in Minkowski space. Let $\hat{\Sigma}=\iota(\Sigma)\subset\mathbb{R}^{3,1}$ be a spacelike 2-surface, and assume that $(\iota,u_a)$ is admissible.
There is then a compact spacelike hypersurface $(\hat{\Omega},\hat{g},\hat{k})$ in Minkowski space with boundary $\partial\hat{\Omega}=\hat{\Sigma}$. Let $\{\hat{\nu},\hat{\mathbf{n}}\}$ be a normal bundle frame for $\hat{\Sigma}$, where $\hat{\nu}$ is the outer normal with respect to $\hat{\Omega}$ and $\hat{\mathbf{n}}$ is future directed timelike. For $\varepsilon>0$ consider the level set frame
\begin{equation}\label{checke}
\hat{e}'_3=\frac{\sqrt{|\nabla \hat{u}|^2+\varepsilon^2}}{\sqrt{|\nabla_{\partial}u_a|^2+\varepsilon^2}}\hat{\nu} + \frac{\hat{\nu}(\hat{u})}{\sqrt{|\nabla_{\partial}u_a|^2+\varepsilon^2}}\hat{\mathbf{n}},\qquad \hat{e}'_4= \frac{\hat{\nu}(\hat{u})}{\sqrt{|\nabla_{\partial}u_a|^2+\varepsilon^2}}\hat{\nu} +\frac{\sqrt{|\nabla \hat{u}|^2+\varepsilon^2}}{\sqrt{|\nabla_{\partial}u_a|^2+\varepsilon^2}}\hat{\mathbf{n}},
\end{equation}
where $\hat{u}=(-\mathbf{t}+a_i \mathbf{x}^i)|_{\hat{\Omega}}$ is the restriction of the null linear function to the hypersurface, $u_a$ is the restriction of $\hat{u}$ to $\hat{\Sigma}$, with $\nabla$ and $\nabla_{\partial}$ denoting the connections on $\hat{\Omega}$ and $\hat{\Sigma}$ respectively. Note that since the spacetime gradient of the linear function is null it holds that $\hat{\mathbf{n}}(-\mathbf{t}+a_i \mathbf{x}^i)=-|\nabla\hat{u}|$. Moreover, linearity of this function implies that
\begin{align}\label{shmin}
\begin{split}
0=\hat\square(-\mathbf{t}+a_i \mathbf{x}^i)=& \left(\hat{\nabla}_{\hat{\nu}\hat{\nu}}+\hat{\nabla}_{\hat{\mathbf{n}}\hat{\mathbf{n}}}+\vec{H}_0+\Delta_{\partial}\right)(-\mathbf{t}+a_i \mathbf{x}^i)\\
=&\left(\hat{H}\hat{\nu}-(\mathrm{Tr}_{\hat{\Sigma}}\hat{k})\hat{\mathbf{n}}
+\Delta_{\partial}\right)(-\mathbf{t}+a_i \mathbf{x}^i)\\
=&\hat{H}\hat{\nu}(\hat{u})+(\mathrm{Tr}_{\hat{\Sigma}}\hat{k})|\nabla\hat{u}|
+\Delta_{\partial}u_a,
\end{split}
\end{align}
where $\hat\square$ and $\hat{\nabla}$ represent the wave operator and connection on Minkowski space, and $\vec{H}_0=\hat{H}\hat{\nu}-(\mathrm{Tr}_{\hat{\Sigma}}\hat{k})\hat{\mathbf{n}}$ is the mean curvature vector of $\hat{\Sigma}$.
Since $\hat{\mathbf{n}}$ is future pointing timelike, the null condition for the linear function also gives $|\nabla\hat{u}|>0$ on $\hat{\Omega}$. Therefore, an examination of the proof for Proposition \ref{integralidentity} shows that the inequality of \eqref{ident} is in fact an equality. This and \eqref{aohfoaihoghiqphj}, combined with the observation that $\hat{u}$ has vanishing spacetime Hessian \cite[Section 5]{BHKKZ1} (see also \cite[Section 3]{HKK}), and using that $\mu=|J|=0$ in Minkowski space, yield a computation of the surface Hamiltonian
\begin{align}\label{H_0euler}
\begin{split}
\lim_{\varepsilon\to 0}\int_{\hat{\Sigma}}\mathcal{H}_0(\hat{e}'_3,u_a) d\hat{A}
=&\int_{\hat{\Sigma}}\left(|\nabla \hat{u}|\hat{H}+\hat{\nu}(\hat{u})\text{Tr}_{\hat{\Sigma}} \hat{k} -\hat{k}(\nabla_{\partial} u_a,\hat{\nu})\right)d\hat{A}\\
&-\int_{\bar{\partial}\hat{\Omega}}\frac{|\nabla_{\partial}u_a|}{|\nabla \hat{u}|}\nabla_{\partial}u_a\left(\frac{\hat{\nu}(\hat{u})}{|\nabla_{\partial}u_a|}\right)d\hat{A}\\       
=&2\pi \int_{\underline{u}_a}^{\overline{u}_a}\chi(\hat{\Sigma}_s)ds,
\end{split}
\end{align}
where $\hat{\Sigma}_s$ and $\overline{u}_a$, $\underline{u}_a$ denote the level sets, maximum, and minimum of $\hat{u}$ respectively. The next result shows that the same value is achieved by evaluating at the optimal frame, which is uniquely determined by
\begin{equation}
\langle\vec{H}_0,\hat{e}_3\rangle>0,\qquad \quad \langle\vec{H}_0,\hat{e}_4\rangle=\frac{-\Delta_\partial u_a}
{\sqrt{|\nabla_\partial u_a|^2+\varepsilon^2}}.
\end{equation}

\begin{lemma}\label{caseofeq}
Let $(\iota,u_a)$ be an admissible pair for a spacelike 2-surface $\Sigma$, then the reference Hamiltonian satisfies
\begin{equation}
\lim_{\varepsilon\to 0}\int_{\Sigma}\mathcal{H}_0(\hat{e}_3,u_a)dA=2\pi \int_{\underline{u}_a}^{\overline{u}_a}\chi(\hat{\Sigma}_s)ds.
\end{equation}
\end{lemma}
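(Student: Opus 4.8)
The plan is to show that the level set Hamiltonian computed in \eqref{H_0euler} agrees with the Hamiltonian evaluated at the optimal (canonical) frame $\{\hat{e}_3,\hat{e}_4\}$, in the limit $\varepsilon\to 0$. The natural strategy is to compare the two frames directly, exploiting the fact that in Minkowski space the null linear function $\hat{u}$ has \emph{vanishing spacetime Hessian}, which is precisely the equality case in Proposition~\ref{integralidentity}. First I would observe that by Lemma~\ref{lem2.4} applied to $\hat{\Sigma}\subset\mathbb{R}^{3,1}$, for every $\varepsilon>0$ one has the inequality $\int_{\hat{\Sigma}}\mathcal{H}_0(\hat{e}'_3,u_a)\,dA \geq \int_{\hat{\Sigma}}\mathcal{H}_0(\hat{e}_3,u_a)\,dA$, since $\hat{e}'_3$ is a spacelike normal with $\langle\vec{H}_0,\hat{e}'_3\rangle>0$ (this positivity follows from $\hat{H}>|\mathrm{Tr}_{\hat{\Sigma}}\hat{k}|$ exactly as in the chain of inequalities preceding \eqref{aoihfjoqihoihnq}, using that $\iota(\Sigma)$ has outward spacelike mean curvature vector as part of admissibility). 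Hence in the limit, $2\pi\int_{\underline{u}_a}^{\overline{u}_a}\chi(\hat{\Sigma}_s)\,ds \geq \lim_{\varepsilon\to 0}\int_{\hat{\Sigma}}\mathcal{H}_0(\hat{e}_3,u_a)\,dA$, giving one direction.

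For the reverse inequality I would run the argument of Lemma~\ref{lemma2.3} in the Minkowski setting, but now tracking the optimal frame rather than the level set frame. Using formula \eqref{ofnqoignoiwnqhoinq} (the explicit $\varepsilon\to 0$ limit of $\int\mathcal{H}(\bar{e}_3,u_a)\,dA$ derived at the end of the nonnegativity proof, which applies verbatim with $\vec{H}$ replaced by $\vec{H}_0$), we get
\begin{align}
\begin{split}
\lim_{\varepsilon\to 0}\int_{\hat{\Sigma}}\mathcal{H}_0(\hat{e}_3,u_a)\,dA
=&\int_{\hat{\Sigma}}\left(\sqrt{|\vec{H}_0|^2|\nabla_{\partial}u_a|^2+(\Delta_{\partial}u_a)^2}+\alpha_{\tilde{e}_3}(\nabla_{\partial}u_a)\right)dA\\
&+\int_{\tilde{\Sigma}}\nabla_{\partial}u_a\cdot\nabla_{\partial}\left(\sinh^{-1}\tfrac{\Delta_{\partial}u_a}{|\vec{H}_0||\nabla_{\partial}u_a|}\right)dA.
\end{split}
\end{align}
On the other hand the same computation applied to the level set frame (equation \eqref{aohfoaihoghiqphj} with $\Omega$ replaced by $\hat{\Omega}$) together with the vanishing of the spacetime Hessian of $\hat{u}$ gives the value $2\pi\int\chi(\hat{\Sigma}_s)\,ds$ from \eqref{H_0euler}. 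The key point is that when $\overline{\nabla}^2\hat{u}=0$, the level set frame $\hat{e}'_3$ and the optimal frame $\hat{e}_3$ must in fact coincide in the limit: the equality case of Lemma~\ref{lem2.4} forces $|\vec{H}_0|\sinh f = \Delta_{\partial}u_a/\sqrt{|\nabla_{\partial}u_a|^2+\varepsilon^2}$, and one checks from \eqref{shmin} that the hyperbolic angle of $\hat{e}'_3$ relative to $\tilde{e}_3=\vec{H}_0/|\vec{H}_0|$ satisfies exactly this relation as $\varepsilon\to 0$ — indeed \eqref{shmin} rearranges to $\Delta_{\partial}u_a = -\hat{H}\hat{\nu}(\hat{u})-(\mathrm{Tr}_{\hat{\Sigma}}\hat{k})|\nabla\hat{u}|$, which is precisely $|\vec{H}_0|\sinh f\cdot|\nabla_{\partial}u_a|$ after identifying the components of $\vec{H}_0$ and $\vec{w}$ in the $\{\hat{\nu},\hat{\mathbf{n}}\}$ frame via $|\vec{w}|=|\nabla_{\partial}u_a|$.

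The main obstacle, and the step I would spend the most care on, is the interchange of limits and the handling of critical points of $u_a$ on $\hat{\Sigma}$ — i.e.\ making rigorous that one may pass to $\varepsilon=0$ in the identification of frames and that the boundary integrals over $\tilde{\Sigma}$ versus all of $\hat{\Sigma}$ match up. This is the same technical issue resolved by the dominated convergence estimates \eqref{oiqhoihgoqih} in the nonnegativity proof, and I would invoke those bounds (now with the Minkowski second fundamental form and $|\vec{H}_0|$, both smooth and with $|\vec{H}_0|$ bounded below away from the critical set by admissibility) to justify the passage to the limit. Once the two frames are shown to have the same limiting hyperbolic angle, the equality $\lim_{\varepsilon\to 0}\int_{\hat{\Sigma}}\mathcal{H}_0(\hat{e}_3,u_a)\,dA=\lim_{\varepsilon\to 0}\int_{\hat{\Sigma}}\mathcal{H}_0(\hat{e}'_3,u_a)\,dA$ is immediate, and combining with \eqref{H_0euler} finishes the proof. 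A cleaner alternative I would also consider is purely variational: since the functional \eqref{h1} is strictly convex in $f$ with a unique minimizer, and since \eqref{shmin} exhibits the level set frame's angle as satisfying the Euler–Lagrange equation \eqref{theta1} in the $\varepsilon\to 0$ limit, the minimum value equals the level set value, bypassing any explicit frame comparison.
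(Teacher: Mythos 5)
Your proposal is correct and follows essentially the same route as the paper: the heart of both arguments is that the identity \eqref{shmin} (equivalently, the vanishing spacetime Hessian of the null linear function on $\hat{\Omega}$) forces the optimal frame $\{\hat{e}_3,\hat{e}_4\}$ and the level set frame $\{\hat{e}'_3,\hat{e}'_4\}$ to have the same limiting hyperbolic angle as $\varepsilon\to 0$, after which dominated convergence (with bounds of the type \eqref{oiqhoihgoqih}) and the equality \eqref{H_0euler} finish the job. The only cosmetic difference is that you package one inequality via Lemma \ref{lem2.4}, whereas the paper proves the equality directly by writing the optimal frame's angle $q_\varepsilon$ relative to $\{\nu,\mathbf{n}\}$ and matching the limits term by term.
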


\begin{proof}
For convenience we will remove extraneous notation, including the subscript on the mean curvature vector and the hat notation from all objects except the optimal frame. Consider the frame associated with $\Omega$, namely
\begin{equation}
\tilde{e}_3=\frac{\vec{H}}{|\vec{H}|}=\frac{H}{|\vec{H}|}\nu -\frac{(\mathrm{Tr}_{\Sigma}k)}{|\vec{H}|}\mathbf{n}
:=(\cosh\ell) \nu-(\sinh\ell) \mathbf{n},\quad\quad\quad
\tilde{e}_4=-(\sinh\ell) \nu+(\cosh\ell)\mathbf{n}.
\end{equation}
This may be used as an intermediary frame to compute the relation between optimal frame $\{\hat{e}_3,\hat{e}_4\}$ and the $\Omega$ frame $\{\nu,\mathbf{n}\}$. In particular, since
\begin{equation}
\hat{e}_3=(\cosh f_{\varepsilon})\tilde{e}_3 -(\sinh f_{\varepsilon})\tilde{e}_4,\quad\quad\quad \hat{e}_4=-(\sinh f_{\varepsilon})\tilde{e}_3
+(\cosh f_{\varepsilon})\tilde{e}_4
\end{equation}
where
\begin{equation}
\cosh f_{\varepsilon}=\frac{\sqrt{|\vec{H}|^2 (|\nabla_{\partial}u_a|^2+\varepsilon^2)+(\Delta_{\partial}u_a)^2}}{|\vec{H}|\sqrt{|\nabla_{\partial}u_a|^2+\varepsilon^2}},\quad\quad
\sinh f_{\varepsilon}=\frac{\Delta_{\partial}u_a}{|\vec{H}|\sqrt{|\nabla_{\partial}u_a|^2+\varepsilon^2}},
\end{equation}
it follows that
\begin{equation}
\hat{e}_3=(\cosh q_{\varepsilon})\nu-(\sinh q_{\varepsilon})\mathbf{n},\quad\quad\quad
\hat{e}_4 =-(\sinh q_{\varepsilon})\nu+(\cosh q_{\varepsilon})\mathbf{n}
\end{equation}
with $q_{\varepsilon}=f_{\varepsilon}+\ell$ and
\begin{align}\label{qoihoihqogiq}
\begin{split}
\cosh q_{\varepsilon} =& \frac{\sqrt{|\vec{H}|^2 (|\nabla_{\partial}u_a|^2+\varepsilon^2)+(\Delta_{\partial}u_a)^2}}{|\vec{H}|\sqrt{|\nabla_{\partial}u_a|^2+\varepsilon^2}}\cdot\frac{H}{|\vec{H}|}
+\frac{\Delta_{\partial}u_a}{|\vec{H}|\sqrt{|\nabla_{\partial}u_a|^2+\varepsilon^2}}\cdot\frac{\mathrm{Tr}_{\Sigma}k}{|\vec{H}|}\\
\sinh q_{\varepsilon} =&-\frac{\Delta_{\partial}u_a}{|\vec{H}|\sqrt{|\nabla_{\partial}u_a|^2+\varepsilon^2}}\cdot\frac{H}{|\vec{H}|}
-\frac{\sqrt{|\vec{H}|^2 (|\nabla_{\partial}u_a|^2+\varepsilon^2)+(\Delta_{\partial}u_a)^2}}{|\vec{H}|\sqrt{|\nabla_{\partial}u_a|^2+\varepsilon^2}}\cdot\frac{\mathrm{Tr}_{\Sigma}k}{|\vec{H}|}.
\end{split}
\end{align}

The Hamiltonian density function may then be rewritten with respect to the $\Omega$ frame utilizing \eqref{1} and \eqref{2} to obtain
\begin{align}\label{oqh8h888}
\begin{split}
\mathcal{H}_0(\hat{e}_3,u_a)=&\sqrt{|\nabla_{\partial}u_a|^2 +\varepsilon^2}\langle\vec{H},\hat{e}_3\rangle+\alpha_{\hat{e}_3}(\nabla_{\partial}u_a)\\
=&\sqrt{|\nabla_{\partial}u_a|^2 +\varepsilon^2}\left(H\cosh q_{\varepsilon} +(\mathrm{Tr}_{\Sigma}k)\sinh q_{\varepsilon}\right)-k(\nabla_{\partial}u_a ,\nu)-\nabla_{\partial}u_a \cdot\nabla_{\partial} q_{\varepsilon}.
\end{split}
\end{align}
By employing the fact that the spacetime Hessian of $u$ vanishes on $\Omega$, or rather solving for the Laplacian in \eqref{shmin}, we find that
\begin{equation}
\Delta_{\partial}u_a=-H\nu(u)-(\mathrm{Tr}_{\Sigma}k)|\nabla u|.
\end{equation}
This expression may then be inserted into \eqref{qoihoihqogiq} to produce
\begin{equation}\label{rightarrow}
\sqrt{|\nabla_{\partial}u_a|^2 +\varepsilon^2}H\cosh q_{\varepsilon} \rightarrow H|\nabla u|,\quad\quad\quad
\sqrt{|\nabla_{\partial}u_a|^2 +\varepsilon^2}(\mathrm{Tr}_{\Sigma}k)\sinh q_{\varepsilon} \rightarrow (\mathrm{Tr}_{\Sigma}k)\nu(u),
\end{equation}
as $\varepsilon\rightarrow 0$. Moreover, similarly to \eqref{oqignoihihniqh} and \eqref{oiqhoihgoqih} we can estimate
\begin{align}
\begin{split}
|\nabla_{\partial}u_a \cdot\nabla_{\partial} q_{\varepsilon}|\leq & 2\left(\frac{|H|+|\mathrm{Tr}_{\Sigma}k|}{|\vec{H}|}\right)(|\nabla_{\partial}^3 u_a|+|\nabla_{\partial}^2u_a|+|\nabla_{\partial}\log|\vec{H}|||\nabla_{\partial}u_a|)\\
&+2\left(|\nabla_{\partial}(|\vec{H}|^{-1}H)|+|\nabla_{\partial}(|\vec{H}|^{-1}\mathrm{Tr}_{\Sigma}k)|\right)|\nabla_{\partial} u_a|.
\end{split}
\end{align}
Therefore the dominated convergence theorem may be employed to yield
\begin{equation}\label{jhqijgoiqajgpjqpahgoj}
\lim_{\varepsilon\rightarrow 0}\int_{\Sigma}\left(\nabla_{\partial}u_a \cdot\nabla_{\partial} q_{\varepsilon} \right)dA
=\int_{\bar{\partial}\Omega}\frac{|\nabla_{\partial}u_a|}{|\nabla u|}\nabla_{\partial}u_a\left(\frac{\nu(u)}{|\nabla_{\partial}u_a|}\right)dA.
\end{equation}
The desired result now follows from the second equality in \eqref{H_0euler}, together with \eqref{oqh8h888}, \eqref{rightarrow}, and \eqref{jhqijgoiqajgpjqpahgoj}.
\end{proof}

\begin{proof}[Proof of Theorem \ref{thm1}: Rigidity]
To establish part \textit{(3)} of this of this theorem, it suffices to observe that if the inclusion map of a spacelike 2-surface in Minkowski space is admissible with some $u_a$, then the physical and reference Hamiltonian densities are the same, so that the associated energy is zero. It then follows from nonnegativity in part \textit{(1)} that the infimum of energies, and hence the mass of this surface, is zero. 

Consider now part \textit{(2)}. Suppose that for some spacelike 2-surface the pair $(\iota,u_a)$ is admissible, and $E(\Sigma,\iota,u_a)=0$ for all $\mathbf{a}$. Let $(\Omega,g,k)$ be a compact initial data set satisfying the dominant energy condition, with trivial second homology, and boundary $\partial\Omega=\Sigma$. The spacetime harmonic function on $\Omega$ with boundary values $u_a$, will also be denoted by $u_a$. We will follow the general strategy of \cite[Section 7]{HKK}. First observe that for each $\mathbf{a}$ the inequality \eqref{apooihgoiqhoihj} implies 
\begin{equation}\label{oihoiqhohnoqing}
{\nabla}^2 u_a+k|\nabla u_a|=0,\quad\quad\quad \mu=|J|=0   \quad\quad\text{ on }\Omega,
\end{equation}
whenever $|\nabla u_a|\neq 0$. Moreover, the Hopf lemma applied to a maximum point $x_0 \in \Sigma$ of $u_a$ shows that $|\nabla u_a (x_0)|\neq 0$.  Let $\gamma\subset\Omega$ be a curve emanating from $x_0$ parameterized by arclength, and observe that since
\begin{equation}
|\nabla|\nabla u_a||\leq|\nabla^2 u_a|\leq |k||\nabla u_a|
\end{equation}
holds away from critical points, we have 
\begin{equation}
|(\log|\nabla u_a|\circ\gamma)'|\leq |\nabla\log|\nabla u_a||\circ\gamma\leq C
\end{equation}
away from critical points, for some constant $C$. By integrating along arbitrary $\gamma$, it follows that there is a constant $C_1$ such that
\begin{equation}
C_1^{-1}|\nabla u_a(x_0)|\leq|\nabla u_a(x)|\leq C_1 |\nabla u_a(x_0)|,\quad\quad\quad x\in\Omega.
\end{equation}
Hence, $|\nabla u_a|$ does not vanish globally for any $\mathbf{a}$. 

We will now choose three spacetime harmonic functions $u_1$, $u_2$, and $u_3$ on $\Omega$ in the following way.
Let $u_3$ be the spacetime harmonic function associated with $\mathbf{a}_3=(0,0,1)$, and consider a point $p\in \Sigma$ at which $u_3$ achieves its maximum; at this point $|\nabla_{\partial}u_3(p)|=0$. In Minkowski space, the isometric image $\iota(\Sigma)$ is tangent at $\iota(p)$ to a hyperplane $-\mathbf{t}+\mathbf{x}^3 =const$ and lies to one side of it. By computing the projection of Minkowski space gradients for the functions $-\mathbf{t}+a_i\mathbf{x}^i$ onto the tangent space $T_{\iota(p)}\iota(\Sigma)$, it is possible to find two choices for $\mathbf{a}$ such that the corresponding spacetime harmonic functions $u_1$, $u_2$ have nonvanishing boundary gradients at $p$ and satisfy $\nabla_{\partial}u_1(p) \perp \nabla_{\partial}u_2(p)$. It follows that the vector fields $\nabla u_l$, $l=1,2,3$ are linearly independent on $\Omega$.

Consider the quantities
\begin{equation}
\varphi=|\nabla u_1|+|\nabla u_2|+|\nabla u_3|, \quad\quad\quad
Y=\nabla u_1 +\nabla u_2 +\nabla u_3,
\end{equation}
and build the stationary spacetime $(\mathbb{R}\times\Omega,\bar{g})$ where
\begin{equation}
\bar{g}=-(\varphi^2 -|Y|^2) dt^2+2Y_i dx^i dt +g.
\end{equation}
This is the Kiling development of $(\Omega,g,k,\varphi,Y)$ with Killing initial data lapse-shift $(\varphi,Y)$ decomposing the Killing vector $\partial_t =\varphi \mathbf{n}+Y$, where $\mathbf{n}$ is the future pointing unit normal to the constant time slices.
As is shown in \cite[proof of Theorem 7.3]{HKK} the initial data for these slices is $(\Omega,g,k)$, and as a consequence of the vanishing spacetime Hessians \eqref{oihoiqhohnoqing} the function $\varphi^2-|Y|^2 =:c^2$ is constant. Furthermore, observe that 
\begin{equation}
\varphi^2-|Y|^2 =2\sum_{l<m}\left(|\nabla u_l||\nabla u_m|-\nabla u_l \cdot\nabla u_m\right)=\sum_{l<m}|\nabla u_l||\nabla u_m|\left|\frac{\nabla u_l}{|\nabla u_l|}-\frac{\nabla u_m}{|\nabla u_m|}\right|^2.
\end{equation}
Since $|\nabla u_l|$ never vanishes, if $c=0$ then $\nabla u_l \parallel \nabla u_m$ for all $l,m$. In particular, there exist constants $c_{lm}>0$ such that at a given point $x_1\in\Omega$ it holds that
\begin{equation}\label{qiupojqgpoijqg}
\nabla u_l(x_1)-c_{lm} \nabla u_m(x_1)=0.
\end{equation}
We claim that these relations hold at all points. To see this, note that \eqref{oihoiqhohnoqing} implies
\begin{equation}
|\nabla|\nabla (u_l-c_{lm} u_m)||\leq|\nabla^2(u_l -c_{lm} u_m)|\leq |k|||\nabla u_l|-c_{lm}|\nabla u_m||
\leq|k||\nabla(u_l -c_{lm}u_m)|.
\end{equation}
Then integrating along curves emanating from $x_1$ produces
\begin{equation}
C_2^{-1}|\nabla(u_l -c_{lm}u_m)(x_1)|\leq |\nabla(u_l -c_{lm}u_m)(x)|\leq C_2 |\nabla(u_l -c_{lm}u_m)(x_1)|
\end{equation}
for some constant $C_2>0$ and all $x\in\Omega$, yielding the desired claim. However, \eqref{qiupojqgpoijqg} cannot hold at $p\in\Sigma$ due to the properties of the boundary gradients $\nabla_{\partial}u_l(p)$. We conclude that the constant $c\neq 0$, and hence 
\begin{equation}
\bar{g}=-\left(cdt-c^{-1}Y_i dx^i \right)^2+(g_{ij}+c^{-2}Y_i Y_j )dx^i dx^j=-d\bar{t}^2 +(g+d\mathbf{u}^2)
\end{equation}
where $\bar{t}=ct-c^{-1}\mathbf{u}$ and $\mathbf{u}=u_1+u_2+u_3$.

We will now show that the Killing development is isometric to a portion of Minkowski space. Consider the null vector fields
\begin{equation}\label{ohtoiqhoighoqihg}
X_l =\nabla \tilde{u}_l +|\nabla \tilde{u}_l|\mathbf{n}, \quad\quad\quad l=1,2,3,
\end{equation}
where the functions $\tilde{u}_l$ are spacetime harmonic functions on $\Omega$ extended trivially in the $t$-direction to all of $\mathbb{R}\times\Omega$. These functions are chosen in the following way. Let $p_1 \in\Sigma$ be a maximum point for $u_1+u_2+u_3$ on $\Sigma$. Then in Minkowski space, the isometric image $\iota(\Sigma)$ is tangent at $\iota(p_1)$ to a hyperplane $-\mathbf{t}+b_i\mathbf{x}^i =const$ and lies to one side of it, where $|\mathbf{b}|\leq 1$. We may rotate this to a null hyperplane which is still tangent to $\iota(\Sigma)$ at $\iota(p_1)$, and which is defined by $-\mathbf{t}+\tilde{a}_i \mathbf{x}^i =const$ with $|\tilde{\mathbf{a}}|=1$. We then set $\tilde{u}_3$ to be the associated spacetime harmonic function, and note that $|\nabla_{\partial}\tilde{u}_3(p_1)|=0$. As above, we may also find two additional spacetime harmonic functions $\tilde{u}_1$, $\tilde{u}_2$ having nonvanishing boundary gradients at $p_1$ and satisfying $\nabla_{\partial}u_1(p_1) \perp \nabla_{\partial}u_2(p_1)$. Since
\begin{equation}
\sum_l d_l X_l + d_4 \partial_t =\sum_l d_l \nabla\tilde{u}_l + d_4 \nabla(u_1 +u_2 +u_3)
+\left(\sum_l d_l |\nabla\tilde{u}_l| +d_4(|\nabla u_1|+|\nabla u_2|+|\nabla u_3|)\right)\mathbf{n}
\end{equation}
for any constants $d_1,\ldots,d_4$, we find that setting this quantity to zero implies $d_1=d_2=0$ by evaluating on $T_{p_1}\Sigma$. Moreover, since $X_3$ is null and $\partial_t$ is timelike, it follows that $\{X_1,X_2,X_3,\partial_t\}$
is linearly independent. Additionally, note that from \eqref{oihoiqhohnoqing} the functions $\tilde{u}_l$ have vanishing spacetime Hessians, which as in \cite[proof of Theorem 7.3]{HKK} implies that each of these four vector fields is covariantly constant in spacetime. Hence $(\mathbb{R}\times \Omega,\bar{g})$ is flat, and consequently the Riemannian manifold $(\Omega,g+d\mathbf{u}^2)$ is flat. It remains to show that this manifold is isometric to a domain in Euclidean 3-space.

The vanishing quasi-local energy, the admissiblity condition, and inequality \eqref{apooihgoiqhoihj} show that the Euler characteristics agree $\chi(\Sigma_s)=\chi(\hat{\Sigma}_s)$ for regular values $s$. Here $\Sigma_s$ is the $s$-level set of an arbitrary spacetime harmonic function $u_a$, and $\hat{\Sigma}_s$ is the $s$-level set of the corresponding null linear function in $\hat{\Omega}\subset\mathbb{R}^{3,1}$ where $\hat{\Omega}$ is a spacelike fill-in for $\iota(\Omega)$. It follows that $\Omega$ is diffeomorphic to $\hat{\Omega}$.
Consider now the dual 1-forms to the vector fields $X_l$ on spacetime, and restrict them to the $\bar{t}=0$ slice to obtain 1-forms $\omega^l$, $l=1,2,3$ on $(\Omega,g+d\mathbf{u}^2)$. Since the $\omega^l$ are covariantly constant, they are closed. We claim that they are also exact. To see this, it will be shown that they integrate to zero on any closed curve $\mathbf{c}\subset \Omega$. Indeed, such a $\mathbf{c}$ is homologous to a closed curve $\tilde{\mathbf{c}}$ inside the $\bar{t}=-c^{-1}\mathbf{u}$ slice, which coincides with the initial data $(\Omega,g,k)$. Furthermore, according to \eqref{ohtoiqhoighoqihg} the restriction of the $X_l$ dual 1-forms to this hypersurface is exact, and hence integrates to zero along $\tilde{\mathbf{c}}$. Hence, there exist functions $\bar{u}^l \in C^{2,\varsigma}(\Omega)$, $l=1,2,3$ such that $\omega^l=d\bar{u}^l$. Since the $\omega^l$ are covariantly constant, by applying a Gram-Schmidt procedure we may assume that they are orthonormal, and in addition the Hessians of the $\bar{u}^l$ vanish. Therefore
\begin{equation}
g+d\mathbf{u}^2 =(d\bar{u}^1)^2 +(d\bar{u}^2)^2 +(d\bar{u}^3)^2,
\end{equation}
and $(\bar{u}_1,\bar{u}_2,\bar{u}_3)$ can be used as a system of global coordinates on the $\bar{t}=0$ slice.
It follows that the Killing development of $(\Omega,g,k)$ is isometric to a portion of Minkowski space.
\end{proof}

\section{Asymptotics of the Energy}
\label{sec6} \setcounter{equation}{0}
\setcounter{section}{6}

In this section we will establish Theorem \ref{thm2}, which shows that the quasi-local energy asymptotes to the appropriate ADM quantity along coordinate spheres in an asymptotically flat end. Let $(M,g,k)$ be an asymptotically flat initial data set for the Einstein equations, and consider a coordinate sphere $S_r \subset M$. According to \cite[Lemma 2.1]{FST} the mean and Gauss curvatures of these spheres satisfy the following expansions 
\begin{equation}
H=\frac{2}{r}+O(r^{-1-\tau}),\quad\quad\quad\quad K=\frac{1}{r^2}+O(r^{-2-\tau}),
\end{equation}
where $\tau>\tfrac{1}{2}$ is the asymptotic flatness parameter of \eqref{asymflat}. It follows that for sufficiently large $r$ the Gauss curvature is positive, and hence from \cite[pg. 353]{Nirenberg} (see also \cite[(2.18)]{FST}) there exists an isometric embedding into a constant time slice of Minkowski space $\iota_r :S_r \hookrightarrow\mathbb{R}^3 \subset \mathbb{R}^{3,1}$ such that 
\begin{equation}\label{oqhgoihqoihioqh}
|\nabla_{\partial}^l( \iota_r -\mathrm{id}_r)|=O(r^{1-\tau-l}),\quad\quad\quad l=0,1,2, 
\end{equation}
where $\mathrm{id}_r$ is the identity map on the sphere of radius $r$ in $\mathbb{R}^3$. It follows that the null linear function pullback, used to define the quasi-local energy, may be approximated by a linear combination of asymptotically flat coordinates. In particular, if $(x^1,x^2,x^3)$ are coordinates from \eqref{asymflat} in the asymptotic end of $M$ and $u_a=\iota_r^*(-\mathbf{t}+a_i \mathbf{x}^i)$ then
\begin{equation}\label{oingoiangoinqah}
|\nabla_{\partial}^l (u_a -u)|=O(r^{1-\tau-l}),\quad\quad\quad l=0,1,2,
\end{equation}
where $u=a_i x^i$. We will also use the notation $\hat{u}=-\mathbf{t}+a_i \mathbf{x}^i=a_i \mathbf{x}^i$ on $\mathbb{R}^3$.

The pre-limit energy of coordinate spheres may be expressed using \eqref{h1} as
\begin{align}\label{oghqopihjpiojwqph}
\begin{split}
E_{\varepsilon}(S_r,\iota_r,u_a)=&\frac{1}{8\pi}\int_{S_r}\left(\sqrt{|\vec{H}_0|^2(|\nabla_\partial u_a|^2 +\varepsilon^2)+(\Delta_\partial u_a)^2} -f_0 \Delta_{\partial} u_a +\alpha_{\frac{\vec{H}_0}{|\vec{H}_0|}}(\nabla_{\partial}u_a)\right)dA\\
&-\frac{1}{8\pi}\int_{S_r}\left(\sqrt{|\vec{H}|^2(|\nabla_\partial u_a|^2 +\varepsilon^2)+(\Delta_\partial u_a)^2} -f_{\varepsilon}\Delta_{\partial} u_a +\alpha_{\frac{\vec{H}}{|\vec{H}|}}(\nabla_{\partial}u_a)\right)dA,
\end{split}
\end{align}
where $\vec{H}=H\nu-(\mathrm{Tr}_{S_r}k)\mathbf{n}$ and $\vec{H}_0=\hat{H}\hat{\nu}$ are the mean curvature vectors of $S_r \subset M$ and $\iota_r(S_r)\subset\mathbb{R}^{3,1}$, and 
\begin{equation}
f_{\varepsilon}=\sinh^{-1}\left(\frac{\Delta_{\partial}u_a}{|\vec{H}|\sqrt{|\nabla_{\partial}u_a|^2 +\varepsilon^2}}\right),\quad\quad\quad
f_0=\sinh^{-1}\left(\frac{\Delta_{\partial}u_a}{|\vec{H}_0|\sqrt{|\nabla_{\partial}u_a|^2 +\varepsilon^2}}\right).
\end{equation}
In the decomposition of the mean curvature vectors, the normals $\nu$ and $\hat{\nu}$ are tangent to $M$ and the constant time slice of Minkowski space respectively.
From \eqref{asymflat}, \eqref{shmin}, and \eqref{oqhgoihqoihioqh} we have
\begin{equation}\label{fohqoihgoiqhoig}
\Delta_{\partial}u_a=-\hat{H}\hat{\nu}(\hat{u})=-|\vec{H}_0|\hat{\nu}(\hat{u})=-|\vec{H}|\hat{\nu}(\hat{u})+O(r^{-1-\tau})
=-H\hat{\nu}(\hat{u})+O(r^{-1-\tau}),
\end{equation}
since $\hat{\nu}(\hat{u})=O(1)$. In particular, observing that $|\nabla_{\partial}u_a|^2 +\hat{\nu}(\hat{u})^2 =1$ produces
\begin{equation}\label{viahgiohaoig}
|\nabla_\partial u_a|^2 +|\vec{H}|^{-2}(\Delta_\partial u_a)^2 =1 +O(r^{-\tau}),\quad\quad\quad
|\nabla_\partial u_a|^2 +|\vec{H}_0|^{-2}(\Delta_\partial u_a)^2 =1 +O(r^{\tau}).
\end{equation}
With the help of $|\nabla_{\partial}u_a|=O(1)$ it follows that
\begin{equation}       
\frac{\left(|\vec{H}_0|+|\vec{H}|\right)(|\nabla_\partial u_a|^2 +\varepsilon^2)}{\sqrt{|\vec{H}_0|^2(|\nabla_\partial u_a|^2 +\varepsilon^2)+(\Delta_\partial u_a)^2}+\sqrt{|\vec{H}|^2(|\nabla_\partial u_a|^2 +\varepsilon^2)+(\Delta_\partial u_a)^2}}
=|\nabla_\partial u_a|^2+O(r^{-\tau} +\varepsilon^2),
\end{equation}
and therefore
\begin{align}\label{alkflangoinqoihgn}
\begin{split}
&\int_{S_r}\left(\sqrt{|\vec{H}_0|^2(|\nabla_\partial u_a|^2 +\varepsilon^2)+(\Delta_\partial u_a)^2} -\sqrt{|\vec{H}|^2(|\nabla_\partial u_a|^2 +\varepsilon^2)+(\Delta_\partial u_a)^2}\right)dA\\
=& \int_{S_r}\left(|\vec{H}_0|-|\vec{H}|\right)\frac{\left(|\vec{H}_0|+|\vec{H}|\right)(|\nabla_\partial u_a|^2 +\varepsilon^2)}{\sqrt{|\vec{H}_0|^2(|\nabla_\partial u_a|^2 +\varepsilon^2)+(\Delta_\partial u_a)^2}+\sqrt{|\vec{H}|^2(|\nabla_\partial u_a|^2 +\varepsilon^2)+(\Delta_\partial u_a)^2}} dA\\
=& \int_{S_r}\left(|\vec{H}_0|-|\vec{H}|\right)|\nabla_\partial u_a|^2  dA  +O(r^{1-2\tau}+\varepsilon^2 r^{1-\tau}).
\end{split}
\end{align}

Consider now the terms involving $f_{\varepsilon}$ and $f_0$. Notice that by setting $\zeta=|\vec{H}|^{-1}|\vec{H}_0|-1=O(r^{-\tau})$ and applying the mean value theorem to the following function of $\zeta$ we find
\begin{align}
\begin{split}
&\sinh^{-1}\left(\frac{\Delta_{\partial}u_a}{|\vec{H}|\sqrt{|\nabla_{\partial}u_a|^2+\varepsilon^2}}\right)\\
=&
\sinh^{-1}\left(\frac{\Delta_{\partial}u_a}{|\vec{H}_0|\sqrt{|\nabla_{\partial}u_a|^2+\varepsilon^2}}\cdot(1+\zeta)\right)\\
=&\sinh^{-1}\left(\frac{\Delta_{\partial}u_a}{|\vec{H}_0|\sqrt{|\nabla_{\partial}u_a|^2+\varepsilon^2}}\right)
+\left[1+\left(\frac{(1+O(r^{-\tau}))\Delta_{\partial}u_a}{|\vec{H}_0|\sqrt{|\nabla_{\partial}u_a|^2+\varepsilon^2}}\right)^2\right]^{-1/2}\!\!\!\!\!\!\!\!\!\!
\frac{\Delta_{\partial}u_a}{|\vec{H}_0|\sqrt{|\nabla_{\partial}u_a|^2+\varepsilon^2}}\cdot \zeta\\
=&\sinh^{-1}\left(\frac{\Delta_{\partial}u_a}{|\vec{H}_0|\sqrt{|\nabla_{\partial}u_a|^2+\varepsilon^2}}\right)
+\frac{\Delta_{\partial}u_a}{|\vec{H}_0|}\left(\frac{|\vec{H}_0|}{|\vec{H}|}-1\right)\left(1+O(r^{-\tau}+\varepsilon^2)\right),
\end{split}
\end{align}
where \eqref{viahgiohaoig} has also been used. Hence \eqref{fohqoihgoiqhoig} yields
\begin{equation}
\int_{S_r}(f_{\varepsilon}-f_0)\Delta_{\partial}u_a dA=\int_{S_r}\hat{\nu}(\hat{u})^2\left(|\vec{H}_0|-|\vec{H}|\right)dA
+O(r^{1-2\tau}+\varepsilon^2 r^{1-\tau}),
\end{equation}
and combining this with \eqref{alkflangoinqoihgn} gives
\begin{align}\label{gohqoighoiqwhoih}
\begin{split}
&\int_{S_r}\left(\sqrt{|\vec{H}_0|^2(|\nabla_\partial u_a|^2 +\varepsilon^2)+(\Delta_\partial u_a)^2} -f_0 \Delta_{\partial} u_a \right)dA\\
&-\int_{S_r}\left(\sqrt{|\vec{H}|^2(|\nabla_\partial u_a|^2 +\varepsilon^2)+(\Delta_\partial u_a)^2} -f\Delta_{\partial} u_a \right)dA\\
=&\int_{S_r}\left(|\vec{H}_0|-|\vec{H}|\right)dA+O(r^{1-2\tau}+\varepsilon^2 r^{1-\tau})\\
=&8\pi\mathcal{E}+o(1)+O(r^{1-2\tau}+\varepsilon^2 r^{1-\tau}).
\end{split}
\end{align}
Here $\mathcal{E}$ is the ADM energy, and in the last step we utilize the fact that the Liu-Yau and Brown-York energy have the same large sphere limit \cite[proof of Theorem 3.1]{WangYaulimit}, together with the convergence of the Brown-York energy to the ADM energy \cite[Theorem 1.1]{FST}.

Lastly, consider the connection 1-forms within \eqref{oghqopihjpiojwqph}. According \eqref{1} we find that the reference connection 1-form vanishes, and with \eqref{2} as well as \eqref{oingoiangoinqah} it follows that
\begin{align}
\begin{split}
\alpha_{\frac{\vec{H}}{|\vec{H}|}}(\nabla_{\partial} u_a)=&-k(\nabla_{\partial}u_a ,\nu)-\nabla_{\partial}u_a \cdot\nabla_{\partial}h\\
=&-k(\nabla_{\partial}u ,\nu)+O(r^{-1-2\tau})-\nabla_{\partial}u_a \cdot\nabla_{\partial}h \\
=&-\!\left( k\!-\!(\mathrm{Tr}_g k)g\right)(\nabla u,\nu)\!-\!(\mathrm{Tr}_{S_r} k)\nu(u)\!+\! h\Delta_{\partial}u_a
\!-\!\mathrm{div}_{\partial}\left(h\nabla_{\partial} u_a\right)\!+\!O(r^{-1-2\tau}),
\end{split}
\end{align}
where
\begin{equation}
h=-\sinh^{-1}\left(\frac{\mathrm{Tr}_{S_r}k}{|\vec{H}|}\right)=-\left(1+O(r^{-\tau})\right)\frac{\mathrm{Tr}_{S_r}k}{|\vec{H}|}
\end{equation}
with the mean value theorem being used in the last equality.
Furthermore \eqref{oqhgoihqoihioqh} implies that $\hat{\nu}(\hat{u})=\nu(u)+O(r^{-\tau})$, which together with  \eqref{fohqoihgoiqhoig} yields
\begin{equation}
\Delta_{\partial}u_a =-|\vec{H}|\nu(u)+O(r^{-\tau}),
\end{equation}
and hence
\begin{equation}
-(\mathrm{Tr}_{S_r} k)\nu(u)+h\Delta_{\partial}u_a= (\mathrm{Tr}_{S_r}k)\nu(u)\cdot O(r^{-\tau})=O(r^{-1-2\tau}).
\end{equation}
Moreover, since $\nabla u=a_i \partial_{x^i} +O(r^{-\tau})$ we obtain
\begin{equation}\label{oaijfoinaoighh}
\int_{S_r}\left(\alpha_{\frac{\vec{H}_0}{|\vec{H}_0|}}(\nabla_{\partial} u_a)-\alpha_{\frac{\vec{H}}{|\vec{H}|}}(\nabla_{\partial} u_a)\right)dA
=-8\pi\langle \mathbf{a},\mathcal{P}\rangle+o(1)+O(r^{1-2\tau}),
\end{equation}
where $\mathcal{P}$ is the ADM linear momentum. By combining \eqref{oghqopihjpiojwqph}, \eqref{gohqoighoiqwhoih}, and \eqref{oaijfoinaoighh} the desired result is achieved
\begin{equation}
\lim_{r\rightarrow\infty}E(S_r,\iota_r,u_a) =\lim_{r\rightarrow\infty}\lim_{\varepsilon\rightarrow 0} 
E_{\varepsilon}(S_r,\iota_r,u_a)=\mathcal{E}-\langle \mathbf{a},\mathcal{P}\rangle.
\end{equation}

\section{First Variation of the Energy}
\label{sec7} \setcounter{equation}{0}
\setcounter{section}{7}

The purpose of this section is to derive the Euler-Lagrange equation for the function $u_a$ at a critical point of the energy, under ideal conditions. In particular, it will be assumed that the critical pair $(\iota,u_a)$ is admissible and has the following properties. The set of critical points for $u_a$ is sufficiently mild to allow for the interchange of limits as $\varepsilon\rightarrow 0$ with integration and variational differentiation, and the Euler characteristics $\chi(\hat{\Sigma}_s)$ of regular level sets within the reference space fill-in $\hat{\Omega}$ take only the value 1. We will use the notation $\delta$ to denote the operation of variation.

Consider the quantity
\begin{align}
\begin{split}
E_{\varepsilon}(\Sigma,\iota,u_a)=&\frac{1}{4}\int_{\underline{u}_a}^{\overline{u}_a}\chi(\hat{\Sigma}_s)ds\\
&-\frac{1}{8\pi}\int_{\Sigma}\left(\sqrt{|\vec{H}|^2 (|\nabla_\partial u_a|^2+\varepsilon^2)+(\Delta_\partial u_a)^2}+\nabla_{\partial}f_{\varepsilon}\cdot\nabla_{\partial}u_a+\alpha_{\frac{\vec{H}}{|\vec{H}|}}(\nabla_{\partial}u_a)\right) dA,
\end{split}
\end{align}
where
\begin{equation}
f_{\varepsilon}=\sinh^{-1}\left(\frac{\Delta_{\partial}u_a}{|\vec{H}|\sqrt{|\nabla_{\partial}u_a|^2 +\varepsilon^2}}\right).    
\end{equation}
Observe that \eqref{oqignoihihniqh} together with Lemma \ref{caseofeq} and the coarea formula show
\begin{equation}
E(\Sigma,\iota,u_a)=\lim_{\varepsilon\rightarrow 0}E_{\varepsilon}(\Sigma,\iota,u_a).
\end{equation}
Direct calculations yield
\begin{equation}
\delta \int_{\underline{u}_a}^{\overline{u}_a}\chi(\hat{\Sigma}_s)ds=\delta \overline{u}_a -\delta\underline{u}_a,
\end{equation}
\begin{equation}
\delta\int_{\Sigma}\sqrt{|\vec{H}|^2(|\nabla_\partial u_a|^2 +\varepsilon^2)+(\Delta_\partial u_a)^2} dA=\int_{\Sigma}\frac{|\vec{H}|^2 \nabla_{\partial}u_a \cdot\nabla_{\partial}\delta u_a +
\Delta_{\partial}u_a \Delta_{\partial}\delta u_a}{\sqrt{|\vec{H}|^2(|\nabla_\partial u_a|^2 +\varepsilon^2)+(\Delta_\partial u_a)^2}} dA,
\end{equation}
and
\begin{equation}
\delta \int_{\Sigma}\alpha_{\frac{\vec{H}}{|\vec{H}|}}(\nabla_{\partial}u_a) dA=\int_{\Sigma}\alpha_{\frac{\vec{H}}{|\vec{H}|}}(\nabla_{\partial}\delta u_a) dA.
\end{equation}
Moreover since
\begin{equation}
\delta f_{\varepsilon}=\frac{\Delta_\partial\delta u_a -(|\nabla u_a|^2 +\varepsilon^2)^{-1}(\Delta_\partial u_a)\nabla_\partial u_a\cdot\nabla_\partial\delta u_a}{\sqrt{|\vec{H}|^2(|\nabla_\partial u_a|^2 +\varepsilon^2)+(\Delta_\partial u_a)^2}},
\end{equation}
we have
\begin{align}
\begin{split}
\delta \int_{\Sigma}\nabla_\partial f_{\varepsilon}\cdot\nabla_{\partial} u_a dA
=&-\delta \int_{\Sigma}f_{\varepsilon}\Delta_{\partial}u_a dA\\
=&-\int_{\Sigma}\left((\delta f_{\varepsilon})\Delta_{\partial}u_a +f_{\varepsilon}\Delta_{\partial}\delta u_a \right)dA\\
=&\int_{\Sigma}\left(\frac{-\Delta_{\partial} u_a\Delta_\partial\delta u_a +(|\nabla u_a|^2 +\varepsilon^2)^{-1}(\Delta_\partial u_a)^2\nabla_\partial u_a\cdot\nabla_\partial\delta u_a}{\sqrt{|\vec{H}|^2(|\nabla_\partial u_a|^2 +\varepsilon^2)+(\Delta_\partial u_a)^2}}\right)dA\\
&+\int_{\Sigma}\nabla_\partial f_{\varepsilon} \cdot\nabla_{\partial}\delta u_a dA.
\end{split}
\end{align}
It follows that
\begin{align}
\begin{split}
\delta E_{\varepsilon}(\Sigma,\iota,u_a)=&\frac{1}{4}(\delta \overline{u}_a -\delta\underline{u}_a)\\ 
&-\frac{1}{8\pi}\int_{\Sigma}\left(|\vec{H}|(\cosh f_{\varepsilon}) \frac{\nabla_{\partial}u_a\cdot\nabla_{\partial}\delta u_a}{\sqrt{|\nabla_{\partial} u_a|^2+\varepsilon^2}}+\nabla_\partial f_{\varepsilon} \cdot\nabla_{\partial}\delta u_a +\alpha_{\frac{\vec{H}}{|\vec{H}|}}(\nabla_{\partial}\delta u_a)\right)dA.
\end{split}
\end{align}

Under the ideal conditions mentioned at the beginning of this section, we may interchange limit and variational derivative, and apply the dominated convergence theorem as in \eqref{ofnqoignoiwnqhoinq} to obtain
\begin{align}
\begin{split}
\delta E(\Sigma,\iota,u_a)=&\lim_{\varepsilon\rightarrow 0}\delta E_{\varepsilon}(\Sigma,\iota,u_a)\\
=&\frac{1}{4}(\delta \overline{u}_a -\delta\underline{u}_a)\\ 
&-\frac{1}{8\pi}\int_{\Sigma}\left(|\vec{H}|(\cosh f) \frac{\nabla_{\partial}u_a\cdot\nabla_{\partial}\delta u_a}{|\nabla_{\partial} u_a|}+\nabla_\partial f \cdot\nabla_{\partial}\delta u_a +\alpha_{\frac{\vec{H}}{|\vec{H}|}}(\nabla_{\partial}\delta u_a)\right)dA,
\end{split}
\end{align}
where
\begin{equation}
f=\sinh^{-1}\left(\frac{\Delta_{\partial}u_a}{|\vec{H}||\nabla_{\partial}u_a|}\right).    
\end{equation}
If the variations $\delta u_a$ are plentiful enough so as to include all smooth functions, then we find that the critical isometric embedding pair gives rise to a weak solution of the 4th order equation
\begin{equation}
\text{div}_{\sigma}\left(|\vec{H}|(\cosh f)\frac{\nabla_\partial u_a}{|\nabla_\partial u_a|}+\nabla_{\partial} f+V\right)=2\pi(\pmb{\delta}_- -\pmb{\delta}_+),
\end{equation}
in which $V$ is the dual vector field to the connection 1-form $\alpha_{\frac{\vec{H}}{|\vec{H}|}}$ and $\pmb{\delta}_{\pm}$ are Dirac delta distributions at the max and min points.

\end{document}